\title {Spectral Curves for Almost-Complex Tori in $ S ^ 6 $}
\author{Emma Carberry, Erxiao Wang}
\address{ Emma Carberry\\Department of Mathematics\\University of Sydney
 \\ NSW, 2006, Australia} \email{carberry@maths.usyd.edu.au}
\address {Erxiao  Wang \\Department of Mathematics\\National University of Singapore\\2, Science Drive 2\\
Singapore 117543}\email {matwe@nus.edu.sg}
\date{\today}
\subjclass{}
\theoremstyle{plain}
\newtheorem{theorem}{Theorem}[section]
\newtheorem{lemma}[theorem] {Lemma}
\newtheorem{corollary}[theorem] {Corollary}
\newtheorem{proposition}[theorem]{Proposition}
\newtheorem{remark} [theorem] {Remark}
\theoremstyle{definition}
\newtheorem{definition}[theorem]{Definition}
\newcommand{\R}{\mathbb{ R}}
\newcommand{\C}{\mathbb{ C}}
\newcommand{\Z}{\mathbb{ Z}}
\renewcommand{\P}{\mathbb{P}}
\renewcommand{\O}{\mathbb{O}}
\renewcommand{\P}{\mathbb{ P}}
\newcommand{\CP}{\C\P}
\newcommand {\holomorphic} { \mathcal {O}}
\newcommand {\degree} {(6k+1)}
\newcommand{\g}{\mathfrak g}
\renewcommand{\loop}{\Lambda^{\rho,\tau}\g_2 ^\C}
\numberwithin{equation}{section}
\theoremstyle{plain}
\newtheorem{thm}{Theorem}[section]
\theoremstyle{definition}
\newtheorem{remark}[thm]{Remark}
\newcommand{\Pj}{\mathbb{P}}
\newcommand{\sx}{ {\scriptscriptstyle X} }
\newcommand{\la}{\lambda}
\newcommand{\ce}{\mathcal{E}}
\newcommand{\co}{\mathcal{O}}
\DeclareMathOperator{\End}{\mathrm{End}}
\begin{document}
\maketitle

To each non-isotropic almost-complex immersion of a 2-torus into $ S
^ 6 $ we associate an algebraic curve, called the spectral curve,
and a linear flow in the intersection of two Prym varieties on this
spectral curve.   We show that generically the spectral curve is
smooth and compute the dimension of the moduli space of such curves
and of the torus in which the eigenline bundles lie.

\section {introduction}

The six-sphere has a natural almost-complex structure \mbox {$J_x:
T_xS ^6\rightarrow T_xS ^ 6 $} given by \mbox {$J_x (y) = x\cdot y
=x\times y $}, where we consider $ S^6$ as a subspace of the
imaginary octonions and $\cdot $ denotes octonionic multiplication.
An immersion $f: M ^ 2\to S ^ 6$ of a Riemann surface $ M ^ 2 $ is
{\it almost-complex} if $df\circ J _M =J\circ df$, where $ J _ M $
denotes the complex structure on $M$.  This can be interpreted in
terms of a calibrated geometry:
  $f $ is almost-complex precisely when the cone $C$ over $f (M^2)$ is an {\it associative} (singular) submanifold of Im~$\O $.  This means that $ C $ is calibrated with respect to the 3-form
\[
\alpha '(x, y, z) = <x\cdot y, z>
\]
(and hence volume minimising in its homology class) or equivalently
that  $1\oplus T_p(C)$ is an associative subalgebra of $\O $.

Let $f: M ^2\rightarrow S ^ 6 $ be an almost-complex immersion. Such
an $ f $ is necessarily minimal, and the language of harmonic
sequences can be used to classify the types of almost-complex curves
in $ S^6$ \cite {BVW:94}.  The {\it isotropic} (or {\it
superminimal}) immersions can be described in terms of holomorphic
data.  Either they are  totally geodesic and thus given by the
intersection of $ S ^ 6 $ with an associative 3-plane or have been
shown by Bryant to possess a Weierstrass type representation \cite
{Bryant:82:O}.  We show that when $ M ^ 2 $ is a torus, the
non-isotropic immersions also give holomorphic data, by means of a
spectral curve construction.  These immersions are all
superconformal, and either linearly full in $ S ^ 6 $ or linearly
full in a totally geodesic $ S ^ 5\subset S ^ 6 $. In the latter
case, the almost-complex condition says precisely that $ f $ is a
minimal Legendrian immersion. In the case of tori, various authors
\cite {Sharipov:91, MaMa:01, McIntosh:02} have described these
immersions in terms of  algebro-geometric ({\it spectral curve})
data and the periodicity problem has also been solved \cite {CM:03}.
This showed that  minimal Legendrian tori are abundant in the sense
that for every positive integer $n $, there are countably many real
$n $-dimensional families of them. of every real dimension.

We shall describe the spectral data for superconformal
almost-complex tori $ f: M ^ 2\rightarrow S ^ 6 $.  The  condition
that $ f $ is almost-complex is equivalent to the existence of a
primitive lift into the 6-symmetric space $ G _ 2/T ^ 2 $, where $ T
^ 2 $ denotes the maximal torus of $ G _ 2 $. A superconformal
immersion which is merely minimal possesses a primitive lift into
$SO(7)/T^3$, for $T^3$ the maximal torus of $ SO (7)$.  The main
task is to characterise the subspace $G_2/T^2\subset SO(7)/T^3$ in
terms of symmetries of the spectral data. The group $G_2 $ is the
connected component of the subgroup of $ GL (7,\R) $ preserving a
generic 3--form  $\alpha ' $ \cite {Bryant:87}, for example the
calibration form given above.  In his study of Langlands duality for
$G_2 $ Higgs bundles \cite {Hitchin:06}, Hitchin used this to give a
beautiful description of $G_2$ Higgs bundles in terms of spectral
curve data.  He found that the  abelian variety for a $G_2$ Higgs
bundle is not a Prym variety but rather is given by the intersection
of two Prym varieties, the symmetries of which together characterise
$G_2$.  We show that a similar picture persists for superconformal
almost-complex tori $ f: M ^ 2\rightarrow S ^ 6 $; the eigenline
bundles give a linear flow in the intersection of two Prymians.  The
eigenline bundles also satisfy a reality condition since we
complexify the group $G_2$ in order to obtain the spectral data, and
there is also a natural sixfold symmetry by which we quotient.

\section{integrable systems formulation}
We begin by describing how a superconformal almost-complex immersion
$ f: M^2 \rightarrow  S^6 $ yields a primitive lift into the
6-symmetric space $ G _ 2/T ^ 2 $. A non-isotropic branched minimal
immersion $\R^2\rightarrow S ^n $ of finite type is in fact an
immersion (this for example follows from the similar result proven
in \cite{McIntosh:95} for maps into complex projective space) and so
it is no restriction to ask $f $ to be an immersion. Recall that
$G_2\subset SO (7) $ is the automorphism group of the octonion
algebra, given explicitly by those matrices whose (orthonormal)
columns $f_1,\ldots, f_7 $ satisfy
\[
f_3 = f_1\cdot f_2,\, f_5 = f_1\cdot f_4,\, f_6= f_2\cdot f_4,\,
f_7= f_3\cdot f_4.
\]
The maximal torus $ T ^2$ of $ G_2 $ consists of matrices of the
form
\[
\mbox {diagonal} (1, R_{\theta}, R_{\phi}, R_{\theta+\phi})
\]
where the $ R_\theta$ denotes the standard rotation matrix with
angle $\theta $. Taking $ f_1 =f$ and $ f_2$ to be a unit tangent
vector then for any unit-length choice of $f_4\perp {\rm span}_\R
\{f, f_2, f_3\}$ the above equations define a $G_2 $ framing of the
almost-complex immersion $ f $.  We shall make use of a particular
such framing.

One may define an order six involution of $ G _ 2 $ by $\tau =\rm
{Ad} _C$, where
  \[
C: = (1, \, R_\frac {\pi} {3}, \, R_\frac {2\pi } {3},\, R_\pi).
\]
\begin {comment}
 \[C=\left (\begin{array} {ccccccc}
1 & & & & & &\\
&\cos{\frac{\pi} {3}} & -\sin{\frac\pi 3}  & & & &\\
&\sin\frac\pi 3 &\cos\frac\pi 3 & & & &\\
& & &\cos{\frac{2\pi} {3}} & -\sin{\frac {2\pi} { 3} } & &\\
& & &\sin\frac {2\pi} {3} &\cos\frac {2\pi} { 3}  & &\\
& & & & & -1 & 0\\
&& & & & 0 & -1\end{array}\right).\]

\end {comment}
Writing $\g ^\C: =\g\otimes \C $, $\tau $ gives the decomposition
\[
\g ^\C =\oplus _ {j= 0} ^5 \g _ j
\]
where $\g _ j $ is the $\exp {2\pi\sqrt {- 1}j/6} $-eigenspace of
$\tau $. The maximal torus $ T = T ^ 2 $ of $ G_2 $ is preserved by
$\tau $, and hence this also yields the decomposition
\[
T (G_2/T) ^\C=\oplus _ {j= 0} ^5 [\g _ j ].
\]

Recall that the harmonic sequence of a harmonic map from a surface
into a complex projective space is a sequence of line bundles (or
equivalently of maps into $\CP^n $) obtained by taking successive
derivatives
\[
f_{j+1}=\partial_j(\frac {\partial} {\partial z}\otimes f _ j),
\]
and that if some $k $ consecutive elements of this sequence are
mutually orthogonal then so are all sets of $k $ consecutive
elements. The harmonic sequence of $f:M^2\rightarrow S ^6 $ is
defined to be that of its natural lift into $\CP ^6 $ and each of
the maps in the sequence is harmonic.  This sequence enables one to
give a useful classification of harmonic maps from a surface into a
sphere or complex projective space.  The map $f $ is said to be {\it
isotropic} if its harmonic sequence is the Fr\^enet frame of a
linearly full holomorphic curve; this is necessarily the case if
seven consecutive elements of the sequence are orthogonal
\cite{BW:92}
 and such maps are by definition described by holomorphic data. We
  call $f:M^2\rightarrow S ^6 $ {\it superconformal} if precisely
  six consecutive members of its harmonic sequence are orthogonal,
   and every almost-complex $f:M^2\rightarrow S ^6 $ is either isotropic or superconformal \cite{BVW:94}. For further information on harmonic sequences in this context we refer the reader to \cite {BW:92, BPW:95, Burstall:95}.

Given a superconformal $f:M^2\rightarrow S ^6 $ we may then use
harmonic sequences to define a natural lift
\[ F= (f, f_2 , f_3, f_4, f_5, f_6, f_7)\]
so that
\[
[ F ]:\widetilde M ^ 2\rightarrow G_2/T
\] is well-defined.  In fact  we may write
\[ F ^ {-1} d F = (u_0 +u_1) dz + (\bar u_0+\bar u_1) d\bar z,\, u_j\in\g_j.\] This says that the lift $ [ F ] $ is primitive with respect to $\tau $; it satisfies this condition precisely when $ f $ is a superconformal almost-complex immersion \cite {BPW:95}.  Here $\widetilde M ^ 2 $ denotes the universal cover of $ M ^ 2 $.

$G_2 ^\C $ additionally possesses the antiholomorphic  involution
$\rho (g) =\bar g $, which commutes with $\tau  $. The form
$\varphi= F ^ {- 1} d F $ satisfies the Maurer-Cartan equation
\[ d\varphi +\frac 12 [\varphi,\varphi ]= 0\]
and one easily checks that
\[\varphi_\zeta:= (u_0+u_1\zeta) dz+ (\bar u_0+\bar u_1\zeta^ {- 1}) d\bar z\]
does also for all $\zeta\in\C ^*$. These forms define a family
\[
\nabla _\zeta =\nabla +\varphi _\zeta
\]
 of
flat connections in a trivial principal $ G_2 ^\C $ bundle $P $ over
$ M^2 $.

The complex Lie group $G_2 ^ \C $ is the connected component of the
subgroup of $ GL (7,\C) $ preserving a generic 3--form  $\alpha ' $
\cite {Bryant:87}. The action of the general linear group on the
space of 3--forms on a seven dimensional complex vector space $ V $
has a natural open orbit and by ``generic" we mean that $\alpha ' $
lies in this open orbit.  More precisely (see \cite {Hitchin:06})
the $\Lambda ^7V ^*$--valued symmetric bilinear form
\[
q (v, w): = \frac {-1}{6} (v\lrcorner\alpha ')\wedge
(w\lrcorner\alpha ')\wedge \alpha '
\] when viewed as a map $ V\rightarrow V ^*\otimes \Lambda ^ 7 V ^*$ has determinant $\kappa ^3 $, where $\kappa\in (\Lambda ^7V ^*) ^3 $ is a polynomial in $\alpha ' $. The open orbit is
given by $\kappa (\alpha ')\neq 0$.   One sees in this context that
$G_2 ^ \C\subset SO (7,\C) $ by showing that the connected component
of the subgroup preserving $\alpha ' $ also preserves the metric
defined by
\[
g =\frac {q} {\kappa ^ {1/3}}.
\]
Here we have taken the positive root with respect to the orientation
given by $\kappa ^3 $. When $\alpha ' $ has the usual normal form
\[
\alpha ' =(\theta _1\wedge\theta _2-\theta _3\wedge\theta
_4)\wedge\theta _ 5 + (\theta _1\wedge\theta _3 -\theta
_4\wedge\theta _2)\wedge \theta _6 + (\theta _ 1\wedge\theta _4
-\theta _ 2\wedge\theta _ 3)\wedge\theta _ 7+\theta _ 4\wedge\theta
_ 6\wedge\theta _7
\]
then $g $ is the standard Euclidean metric. Thus  we may
equivalently view the connections $\nabla _\zeta $ as acting on a
rank seven holomorphic vector bundle  { \boldmath { $ \mathcal  { V}
$} } on $M$ with $ G_2 ^\C $-structure given by a $ 3 $-form $\alpha
' $ on each $ V _ z $ with $\kappa (\alpha ')\neq 0 $. The family of
flat connections $\nabla _\zeta $ respects these symmetries in the
sense that
\[
\tau(\varphi_\zeta) =\varphi_ {\epsilon\zeta},\,\rho (\varphi_\zeta)
=\varphi_ {\bar\zeta ^ {- 1}},
\]
where $\epsilon =\exp {\pi\sqrt {- 1}/3} $.

To summarise, a non-isotropic almost-complex immersion $
f:\widetilde M\rightarrow S^6$ is equivalent to a $\C ^*$-family of
flat connections $\nabla +\varphi_\zeta $ in a trivial $ G_2^\C$
bundle { \boldmath { $ \mathcal  { V} $} } over $ M $, where
$\varphi_\zeta $ is symmetric with respect to $\tau,\rho $ as
specified above. Consider now the twisted loop algebrae given by
\[
{\Lambda ^ {\rho,\tau}_d\g_2 ^\C}:= \{ A_{\zeta} = \sum_{j=-d}^{d}
A_j \zeta ^j | A_j \in \g_j, A_{-j}=\rho(A_j), \zeta\in \C^*\} ,
\,d\in\mathbb {N}.
\]
These symmetries can also be expressed as
\begin {equation}\label {equation:symmetries}
\rho(A_{\zeta})=A_{ \bar{\zeta}^{-1} } \mbox { and }\tau
(A_{\zeta})=A_{\epsilon\zeta}.
\end {equation}
 If $A
=\sum_{j = - d} ^ {d}A_j\zeta ^ j: \widetilde M \rightarrow
{\Lambda^{\rho,\tau}_d\g_2 ^\C} $ satisfies the Lax pair equation
\[
dA =[A,\varphi_\zeta ]
\]
with $\varphi_\zeta = (A_{d -1} +A_d\zeta) dz+ (A_{1- d} + A_{-
d}\zeta^ {- 1})d\bar z $
 we call it a \emph{polynomial Killing field}. Note that the Lax equation is equivalent to
the requirement that $A $ be a parallel section of the pullback of
Ad$ (P) $ to $\widetilde M $. A polynomial killing field clearly
yields a superconformal almost-complex immersion $\widetilde
M\rightarrow S ^ 6 $, and if appropriate periodicity conditions are
satisfied than this immersion descends to $ M $. We shall now
restrict our attention to  almost-complex immersions  $ f: \C
\rightarrow S ^ 6 $ obtained from polynomial Killing fields; these
maps are said to be \emph{of finite type}, and include all doubly
periodic examples \cite {BPW:95}. The Lax equation clearly forces
$d=6k+1$.

\section{Spectral data for the Lax equations}
We show that a non-isotropic almost-complex immersion $ f:
\C\rightarrow S ^ 6 $ of finite type gives rise to an algebraic
curve, which we call the spectral curve.  There are a number of
different constructions of spectral curves which have been used in
other geometric settings.  Beginning with a commutative algebra of
polynomial Killing fields, one can use the characteristic
polynomial, as in \cite {AM:80:1, Griffiths:85, Hitchin:06}, the
eigenline curve, as in \cite {Hitchin:90} or take Spec, as in \cite
{McIntosh:95}.  Alternatively, one could take a more Lie-theoretical
approach, as in \cite {KP:94, Kanev:89}.  The characteristic
polynomial is the simplest definition, but can for some surface
classes have unnecessary singularities which do not reflect the
underlying geometry.  We shall prove that for non-isotropic
almost-complex immersions of the plane into $ S ^ 6 $, the
characteristic polynomial is generically smooth, justifying our
choice.

We may pull the trivial bundle { \boldmath { $ \mathcal  { V} $} }
and the three form $\alpha ' $ back to $\C\times\P ^ 1 $, and write
$ \mathbf { V},\alpha ' $ for the restrictions to $\{z\}\times\P^1 $
(so $\mathbf{V} $ is the trivial rank seven holomorphic vector
bundle on $\P^1 $). The polynomial Killing fields $ A '_{\zeta} (z)
\in\loop $ are holomorphic sections of $\co(2(6k+1)) \otimes \End
\mathbf{V}$. We shall realise the characteristic polynomial of $ A
'_{\zeta} (z) $ inside the total space of a line bundle over $\P ^ 1
$. The eigenvalues of an element of $\g_2^\C $ are of the form $0$,
$\mu _1$, $\mu_2$, $\mu_3$, $-\mu_1$, $-\mu_2$, $-\mu_3$, and
satisfy $\mu_1+\mu_2+\mu_3=0$. Hence the characteristic polynomial
has the form
\[
\det (\mu \cdot id_{\mathbf{V}} -  A '_{\zeta} (z)) = \mu (\mu^6 -
a_1 \mu^4 + \frac{a_1^2}{4} \mu^2 - a_2 )
\]
with
\[
 a_{1}(\zeta) = \mu_1^2+\mu_2^2+\mu_3^2, \quad a_{2}(\zeta) = (\mu_1 \mu_2  \mu_3)^2 ,
\]
We let $\hat{X}$ be the complex surface which is the total space of
the line bundle $\co(2(6k+1))$ and $\pi_{\hat{\sx}}$ its projection
onto $\Pj^{1}$. The pull-back of $\co(2(6k+1))$ to $\hat{X}$ has a
tautological section $\mu$.  The characteristic polynomial is thus a
section of $ H^{0}(\hat{X}, \pi_{\hat{\sx}}^{\ast} \co(14(6k+1)) )
$, and defines a reducible algebraic curve $\hat\Sigma '  $ in $\hat
X $, one of whose components is $\Pj^{1}$.  We denote the other
component by $\hat {\Sigma}  $. The matrices $ A ' _ {\zeta} (z) $
for different $ z $ are related by conjugation, and hence
$\hat\Sigma ' $ is independent of $ z $.

  The $\tau $-symmetry of \eqref {equation:symmetries} gives that $a_j (\epsilon\zeta)
=a_j(\zeta) $ so we may write
$a_{j}(\zeta)=b_{j}(\la)|_{\la=\zeta^6}$.   Notice that this forces
the degrees of $a _ 1, a _ 2 $ to be divisible by six so that $a _ 1
$ cannot be an element of $H^{0}(\Pj _\zeta ^{1}, \co(4 (6k+1)) ) $
of full degree;
we have instead that $a _ 1\in H^{0}(\Pj _\zeta ^{1}, \co(4 (6k)) )
$. Clearly $a _2\in H^{0}(\Pj _\zeta ^{1}, \co (12(6k+1))$. The
$\rho $-symmetry of  \eqref {equation:symmetries}  yields
$\overline{b_{j}(\la)}=b_{j}(\bar{\la}^{-1})$. Let $ X $ denote the
total space of the line bundle $\co (2 ( k +1) ) $ and $\pi_{\sx}$
its projection onto $\Pj^{1}$. Writing $\eta $ for the tautological
section of $\pi_{\sx} ^*\co (2 ( k +1) ) $ over $X $, then again
$\Sigma ':=\hat{\Sigma '} /\tau $ decomposes into a copy of $\P^1 $
and the algebraic curve $\Sigma $ given by the divisor of
 \begin {equation*}
\eta^6-b_1\eta^4+\frac{b_1^2}{4} \eta^2 - b_2 \in H^{0}(X,
\pi_{\sx}^{\ast} \co(12 (k +1 )) ).
\end {equation*}
We call $\Sigma '$ the \emph{spectral curve} of the almost-complex
immersion $ f:\C\rightarrow S ^ 6 $, and we term $\Sigma $ the
\emph{main component} of $\Sigma '$.

\begin{theorem}
The main component $\Sigma $ of a generic spectral curve $\Sigma ' $
is smooth, with genus $5(6k+1)$. The real dimension of the moduli of
such curves $\Sigma $ is $(16k+4)$.
\end{theorem}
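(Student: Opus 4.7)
The plan is to work first on the sixfold cover $\hat\Sigma\subset\hat X$, where the grading symmetry $\tau$ is trivially handled, and then to descend under the free $\tau$-quotient. Smoothness of $\hat\Sigma$ follows from the Jacobian criterion applied to the defining polynomial $Q(\mu,\zeta) = \mu^6 - a_1\mu^4 + \tfrac{a_1^2}{4}\mu^2 - a_2$. One computes $Q_\mu = 6\mu(\mu^2 - a_1/2)(\mu^2 - a_1/6)$, so every singular point must have $\mu=0$, $\mu^2=a_1/2$, or $\mu^2=a_1/6$; substituting into $Q=0$ forces either $a_2=0$ or $a_1^3 - 54 a_2 = 0$, and substituting also into $Q_\zeta=0$ forces the relevant polynomial to have a multiple zero. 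Since for generic data both $a_2$ and $a_1^3-54a_2$ have only simple zeros, $\hat\Sigma$ is smooth.

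The next step is to verify that $\tau$ acts freely on $\hat\Sigma$, so that $\Sigma=\hat\Sigma/\tau$ is smooth with $\chi(\hat\Sigma)=6\chi(\Sigma)$. Off the exceptional fibres $\zeta=0,\infty$ this is automatic, and over $\zeta=0$ the six branches of $\hat\Sigma$ hit the fibre, after the rescaling $\tilde\mu=\zeta^{6k+1}\mu$, in the six nonzero eigenvalues of the leading Laurent coefficient $A_{-(6k+1)}\in\g_5$. Writing these as $\pm\mu_1,\pm\mu_2,\pm\mu_3$, the $\g_2^\C$-trace condition gives $\mu_1+\mu_2+\mu_3=0$, while $\tau$-symmetry forces additionally $\mu_1^2+\mu_2^2+\mu_3^2=0$ (because the $\zeta^{-2(6k+1)}$ coefficient of $a_1$ lies in a degree that is not a multiple of six and so must vanish). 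Together these imply that the three squared eigenvalues form the orbit $\{c^2,\epsilon^2 c^2,\epsilon^4 c^2\}$, and hence the six eigenvalues themselves constitute the single $\epsilon$-orbit $\{\epsilon^k c\}_{k=0}^{5}$; in the rescaled coordinate $\tau$ acts as $\tilde\mu\mapsto\epsilon^{6k+1}\tilde\mu=\epsilon\tilde\mu$, cyclically permuting this orbit and so fixed-point free.

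Riemann--Hurwitz on $\hat\Sigma\to\P^1_\zeta$ now yields the genus. The discriminant
\begin{equation*}
\mathrm{Disc}_\mu(Q) \;=\; 16\,a_2^3(a_1^3 - 54 a_2)^2 \;\in\; H^0\bigl(\P^1_\zeta,\co(60(6k+1))\bigr)
\end{equation*}
contributes, at each of the $12(6k+1)$ simple zeros of $a_2$, three ramification points of index two (since $Q$ factorises there as $\mu^2(\mu^2-a_1/2)^2$), and at each of the $12(6k+1)$ simple zeros of $a_1^3-54a_2$, two such points (from the factorisation $(\mu^2-a_1/6)^2(\mu^2-2a_1/3)$), totalling $60(6k+1)$. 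Hence $2g(\hat\Sigma)-2 = -12+60(6k+1)$, $g(\hat\Sigma)=180k+25$, and $g(\Sigma) = (180k+24)/6 + 1 = 5(6k+1)$.

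The spectral curve is determined by the pair $(b_1,b_2)$ with $b_1\in H^0(\P^1_\la,\co(4k))$ and $b_2\in H^0(\P^1_\la,\co(2(6k+1)))$, subject to the reality condition $\overline{b_j(\la)}=b_j(\bar\la^{-1})$; on coefficients $p_m$ of a polynomial representative of degree $N$ this reads $p_m=\bar p_{N-m}$, giving $4k+1$ real parameters for $b_1$ and $12k+3$ for $b_2$, summing to $16k+4$. The principal obstacle in this plan lies in the second step: the single-orbit structure of $\tau$ at the exceptional fibres hinges on the two eigenvalue constraints $\sum\mu_i=0$ and $\sum\mu_i^2=0$, one structural to $\g_2^\C$ and the other coming from the non-multiple-of-six Laurent degree $\zeta^{-2(6k+1)}$ in the twisted loop algebra.
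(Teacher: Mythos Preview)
Your argument is correct and reaches the same conclusions, but by a genuinely different route from the paper. The paper works directly on the quotient curve $\Sigma\subset X$: for each fixed $b_1$ it regards the family $c(\eta^6-b_1\eta^4+\tfrac{b_1^2}{4}\eta^2)-b_2$ as a linear subsystem of $|\pi_X^*\co(12(k+1))|$, checks that the affine slice $c=1$ is base-point free, and invokes Bertini's theorem for generic smoothness; the genus then comes from Riemann--Hurwitz for $\lambda:\Sigma\to\P^1$, which is \emph{totally branched} over $0,\infty$ (since $b_2\sim\lambda^{\pm(6k+1)}$ forces $\eta\sim\lambda^{\pm(6k+1)/6}$, a non-integral power). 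You instead run the Jacobian criterion on $\hat\Sigma$ and descend by the free $\tau$-action, the crux being that the six nonzero eigenvalues of $A_{-(6k+1)}\in\g_5$ form a single $\langle\epsilon\rangle$-orbit. The Bertini route is slicker and sidesteps any fibre-by-fibre analysis at $\zeta=0,\infty$; your route is more elementary, gives an explicit discriminant, and as a by-product computes $g_{\hat\Sigma}$. On that last point your value $g_{\hat\Sigma}=180k+25$ actually \emph{disagrees} with the paper's subsequent Remark, which claims $|\hat R|=60(6k+1)+10$ and $g_{\hat\Sigma}=30(6k+1)$; your count is the right one, since $\hat\Sigma\to\P^1_\zeta$ is unramified over $0,\infty$ (there $a_1$ vanishes but $a_2$ does not, giving six simple roots $\mu^6=a_2$), so the ``$+10$'' in the Remark is spurious. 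One small omission: your Jacobian computation is carried out in an affine chart, and smoothness of $\hat\Sigma$ over $\zeta=0,\infty$ should be stated separately --- but this is immediate from the same observation that the six eigenvalues there are distinct and nonzero.
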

\begin{proof}   We begin by proving smoothness, for which we use an argument similar to that employed by \cite {KP:94} in their study of spectral curves of $G _2 $ Higgs bundles.
Set $B:=H^{0}(\Pj^{1}_{\la}, \co(4k)) \oplus H^{0}(\Pj^{1}_{\la},
\co(12k+2)) $, and write $ B _\R $ for the real slice given by
$\overline{b_{j}(\la)}=b_{j}(\bar{\la}^{-1})$.  The main component
$\Sigma $ of the spectral curve is given by $ (b _ 1, b _ 2) \in B
_\R $, and we wish to show that those points which yield singular
spectral curves are contained in a lower dimensional subvariety of $
B _\R $.   Recall that  the complete linear system $|D| $ defined by
a divisor $D$ on $ X $ is the space of all effective divisors
linearly equivalent to $D$, a linear system $l $ is any linear
subspace of $|D|=\P H^0(X,\mathcal O (D)) $ and that $p\in X $ is a
base point of $l$ if $p $ is contained in all members of $ l$. We
denote the base locus of $ l $ on $ X $ by $B_X{l} $.  Bertini's
theorem \cite {Bertini:1907} says in part that a generic element of
a linear system is smooth away from the base locus of the system.
 For each fixed $b_1\in H^0(\P^1,\co (4k)) $,  as $c\in\R$ and $b_2\in\co (12 k+2) $ vary,
\begin {equation}\label{equation:linear}
c(\eta^6-b_1\eta^4+\frac{b_1^2}{4} \eta^2 )- b_2
\end {equation}
 defines a linear subsystem $l (b_1 ) $ of $|\pi_X^*\co(12 k+2 )| $.  This contains the affine subspace $ a (b_1) \simeq\P (\pi ^*H^0(\P^1,\co (12 k+2)) $ defined by setting $c=1$.  We will show that $ a (b_1) $ is base point free, and hence the same is true for $ l (b _1) $. In particular $ a (b_1) $ contains the divisor of  $\eta^6-b_1\eta^4+(b_1^2/4) \eta^2 = \eta^2
(\eta^2-b_1 / 2)^2$ and so the base locus $B_X{a(b_1 ) } $ of $a(b_1
)  $ is a subset of
\[
B_{ \{\eta=0\} } a (b_1) |_{ \{\eta=0\} }\cup B_{
\{\eta^{2}=b_{1}/2\} }a (b_1) |_{ \{ \eta^{2}=b_{1}/2 \} }.
\]
The restriction of $ a (b_1) $ to the projective line $\{\eta = 0\}
$ clearly has no base points.  The hyperelliptic curve $ C ({b _1})
$ defined by $\eta^{2}=b_{1}/2 $ is smooth for a generic choice of $
b _1 $, and writing $\pi $ for the projection to $\P ^1 $, we have
\[
 B _ {C (b _1)} \P (\pi ^*H^0(\P^1,\co (12 k+2))
 = B _ {C(b _1)} \pi ^*|\co (12 k+2)|.
\]
But the projection of this base locus to $\P ^1 $ is then contained
in the base locus for $|\co (12 k+2)| $, which is empty.

 We have then that for a generic $ b _ 1 $ the linear system $a (b_1)$ has no base points.  Since dividing \eqref{equation:linear} by the constant $ c$ does not change the resulting curve,  Bertini's theorem implies that a generic $ (b _ 1, b _ 2)\in B $ defines a smooth curve.  Taking real points yields the analogous result for $ B _\R $, which has real dimension $ (4 k+1) + (12k +3) = 16 k +4 $, and parameterises the space of spectral curves.

We now compute the degree of the ramification divisor $ R $ of the
projection $\lambda:\Sigma\rightarrow\P ^ 1 $.  The cover is clearly
totally branched over $0$ and $\infty$, and the ramification points
for $\lambda\in\C ^*$ are given by the vanishing of the discriminant
\[
\triangle= b_2(\frac{1}{2} b_1^3 -27b_2).
\]
At the $ 12 k +2 $ points for which $ b _ 2 (\lambda) = 0 $, the
equation for $\Sigma $ factors as
\[
\eta ^ 2 (\eta ^ 2 -\frac {b _ 1} {2}) ^ 2 = 0,
\]
and we may assume that three pairs of eigenvalues come together as
$\mu_{1}=-\mu_{1}=0$, $\mu_{2}=-\mu_{3}=\sqrt{b_{1}/2}$ and
$\mu_{3}=-\mu_{2}=-\sqrt{b_{1}/2}$.  At the $ 12 k +2 $ points where
$27b_2-(1/2) b_1^3=0$, we have
\[
\left (\eta ^ 2 -\frac {b _ 1} {6}\right) ^ 2\left (\eta ^ 2 -\frac
{2 b _ 1} {3}\right)= 0
\]
and we may assume that two pairs of eigenvalues come together as
$\mu_{1}=\mu_{2}=\sqrt{b_{1}/6}$, $-\mu_{1}=-\mu_{2}$ (thus $\mu
_{3}=-2 \sqrt{b_{1}/6}$). Hence
\[
|R| = (12k+2)\cdot 3+(12k+2)\cdot 2+2\cdot 5 = 20 (3k + 1)
\]
so by the Riemann-Hurwitz formula,
\[
2 - 2g = 6 \cdot 2 - |R|= -60 k-8
\]
and the genus of the smooth algebraic curve $\Sigma $ is $g =
5(6k+1)$. Since arithmetic genus is constant in families, this is
also the arithmetic genus of any singular curves $\Sigma $.
\end{proof}
\begin {remark} \label {degree genus} We hence also obtain that generically the curve $\hat\Sigma $ is smooth, the ramification divisor $\hat R $ of the map $\zeta:\hat\Sigma\rightarrow\P ^1 $ has degree
$ |\hat R| = 60\degree + 10 $ and the genus of $\hat\Sigma $ is $
30\degree $.
\end {remark}
We explain now how the spectral curve $\Sigma '$ can be realised as
the characteristic polynomial of a polynomial killing field
depending only on $\lambda =\zeta ^6$.  Let
\[
C _\zeta =\mbox {diagonal} (1, S_\zeta, S_{\zeta ^2}, S_{\zeta ^3})
\]
where
\[
S_\zeta =\left (\begin {array} {cc}\frac 12 (\zeta +\zeta^ {- 1}) &\frac {-1}{2i} (\zeta -\zeta^ {- 1})\\[4pt ]
\frac {1}{2i} (\zeta -\zeta^ {- 1}) &\frac {-1}{2} (\zeta +\zeta^ {-
1}) \end {array}\right)
\]
and define
\[
A '_\zeta (z) =\mbox {Ad}_{C_{\zeta^ {- 1}}}  A ' _\zeta (z).
\]
\begin {lemma}
For each $ z\in T^2$, the endomorphism $A ' _\zeta (z) $ is a
polynomial Killing field depending only on $\lambda =\zeta ^6 $.  As
a function of $\lambda $ it satisfies $A ' _\lambda (z)\in\Lambda _
{2(k+1)} ^\rho\g _2 ^\C $ whenever $  A ' _\zeta (z)\in\Lambda _
{2(6k + 1)} ^ {\rho,\tau}\g _2 ^\C $.
\end {lemma}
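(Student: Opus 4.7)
The plan is to treat $C_\zeta$ as a one-parameter subgroup of the complexified maximal torus $T^\C\subset G_2^\C$ and read off how conjugation by it reorganises the $\zeta$-dependence. From the explicit form of $S_\zeta$ one verifies that $C_\zeta\in T^\C$, that $C_{\zeta_1\zeta_2}=C_{\zeta_1}C_{\zeta_2}$ with $C_\epsilon=C$, and that $\overline{C_\zeta}=C_{\bar\zeta}$. The structural fact I will repeatedly use is that $\mathrm{Ad}_{C_\zeta}$ preserves each $\g_j$ but acts on a root vector $e_\alpha\in\g_j$ by $\zeta^{h(\alpha)}$, where the height $h(\alpha)$ is either $j$ or $j-6$; because the $G_2$ root system has positive roots of heights $1,1,2,3,4,5$, both values $j$ and $j-6$ actually occur on the non-Cartan pieces $\g_j$.

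I would first establish the two symmetries of $\tilde A_\zeta:=\mathrm{Ad}_{C_{\zeta^{-1}}}A'_\zeta$. Using $\tau(A'_\zeta)=A'_{\epsilon\zeta}$ together with $C_{\epsilon^{-1}\zeta^{-1}}C_\epsilon=C_{\zeta^{-1}}$, a direct manipulation gives
\[
\tilde A_{\epsilon\zeta}=\mathrm{Ad}_{C_{(\epsilon\zeta)^{-1}}}\mathrm{Ad}_C\,A'_\zeta=\mathrm{Ad}_{C_{\zeta^{-1}}}A'_\zeta=\tilde A_\zeta,
\]
so $\tilde A$ descends to a function of $\lambda=\zeta^6$. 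The $\rho$-symmetry is obtained analogously: $\rho(\tilde A_\zeta)=\mathrm{Ad}_{\overline{C_{\zeta^{-1}}}}\rho(A'_\zeta)=\mathrm{Ad}_{C_{\bar\zeta}}A'_{\bar\zeta^{-1}}=\tilde A_{\bar\zeta^{-1}}$, which in the variable $\lambda$ reads $\rho(\tilde A_\lambda)=\tilde A_{\bar\lambda^{-1}}$, the defining condition for $\Lambda^\rho\g_2^\C$.

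The degree count is the decisive step. I would decompose each coefficient $A'_j\in\g_{j\bmod 6}$ as $A'_j=A_j^++A_j^-$, with $A_j^\pm$ supported on the height-$(j\bmod 6)$ and height-$((j\bmod 6)-6)$ root lines respectively. Then $\mathrm{Ad}_{C_{\zeta^{-1}}}(A'_j\zeta^j)=A_j^+\lambda^{q(j)}+A_j^-\lambda^{q(j)+1}$, where $q(j):=(j-(j\bmod 6))/6$. As $j$ runs over $[-(6k+1),6k+1]$, the integer $q(j)$ ranges over $[-(k+1),k]$; the shift by $+1$ on the $A^-$-components supplies the top power $\lambda^{k+1}$ (from $j=6k+1$), while the bottom power $\lambda^{-(k+1)}$ already arises from the $A^+$-component at $j=-(6k+1)$. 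Hence $\tilde A_\lambda$ is a Laurent polynomial in $\lambda$ of degree range $[-(k+1),k+1]$, placing it in $\Lambda^\rho_{2(k+1)}\g_2^\C$.

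Finally, since $C_\zeta$ is $z$-independent, the Lax equation transports directly: $d\tilde A=\mathrm{Ad}_{C_{\zeta^{-1}}}[A',\varphi_\zeta]=[\tilde A,\tilde\varphi_\zeta]$ with $\tilde\varphi_\zeta:=\mathrm{Ad}_{C_{\zeta^{-1}}}\varphi_\zeta$. Applying the same height decomposition to $u_1\in\g_1$ and $\bar u_1\in\g_5$ turns $\tilde\varphi$ into the $\lambda$-polynomial
\[
\tilde\varphi_\lambda=(u_0+u_1^++\lambda u_1^-)\,dz+(\bar u_0+\bar u_1^-+\lambda^{-1}\bar u_1^+)\,d\bar z,
\]
exhibiting $\tilde A$ as a polynomial Killing field for this new $\lambda$-family. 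The step I expect to be most delicate is the two-tier height decomposition inside each $\g_j$: without noticing that each non-trivial $\g_j$ carries root lines at both heights $j$ and $j-6$, one would lose the extreme powers $\lambda^{\pm(k+1)}$ and end up one degree short of the asserted loop algebra.
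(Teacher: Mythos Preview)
Your argument is correct and reaches the same conclusion as the paper, but the degree estimate is obtained by a genuinely different route. For the $\tau$- and $\rho$-symmetries, both proofs are essentially the same computation: the paper restricts to $|\zeta|=1$ (where $S_\zeta=R_\theta$) and invokes holomorphic rigidity, while you use the one-parameter-group law $C_{\zeta_1\zeta_2}=C_{\zeta_1}C_{\zeta_2}$ directly; these are equivalent. One small slip: the reality identity should read $\overline{C_\zeta}=C_{\bar\zeta^{-1}}$, not $\overline{C_\zeta}=C_{\bar\zeta}$ (check it on $|\zeta|=1$, where $C_\zeta$ is real). Your displayed $\rho$-computation nonetheless uses the correct consequence $\overline{C_{\zeta^{-1}}}=C_{\bar\zeta}$, so the conclusion stands.

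The interesting divergence is in bounding the $\lambda$-degree. The paper's argument is purely numerical and avoids any root-system analysis: since $C_\zeta$ and $C_{\zeta^{-1}}$ are each Laurent in $\zeta$ of range $[-3,3]$, conjugation produces something of $\zeta$-degree at most $2(6k+1)+12=12k+14$; but having already shown the result depends only on $\zeta^6$, the degree must be a multiple of $6$, forcing it down to $12(k+1)$, i.e.\ $\lambda$-degree $2(k+1)$. Your approach instead identifies $\zeta\mapsto C_\zeta$ with the principal cocharacter, so that $\mathrm{Ad}_{C_{\zeta^{-1}}}$ scales each root line $e_\alpha$ by $\zeta^{-h(\alpha)}$, and then tracks the two height-classes inside each $\g_j$ explicitly. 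This is more structural and tells you exactly which root components feed the extreme powers $\lambda^{\pm(k+1)}$, at the cost of needing the height distribution of the $G_2$ root system; the paper's divisibility trick is slicker but less informative. You also verify explicitly that the Lax equation transports under conjugation by the $z$-independent $C_{\zeta^{-1}}$, a point the paper leaves implicit.
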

\begin {proof} By the rigidity of holomorphic functions, it suffices to prove these results for $\zeta = e ^ {i\theta}\in S^1 $.  Then $S _\zeta = R _\theta $ and so $A ' _ {\epsilon\zeta} =\mbox {Ad}_{C _ {\epsilon^ {- 1}\zeta^ {- 1}}}\mbox {Ad}_{C _ {\epsilon}} A ' _\zeta =A ' _\zeta $, and so $A ' _\zeta $ depends only on $\lambda =\zeta ^6 $. Also $ S _\zeta ^ {- 1} = R _\theta ^ {- 1} = R _ {-\theta} = S _ {\zeta^ {- 1}} $ and so $A '_\lambda $ is polynomial in $\lambda $. Since $ C _\zeta $ and $ C _ {\zeta^ {- 1}} $ are each of the form $\sum _ {j=-3}^3 C_j\zeta ^ j $, {\it a priori } we would expect that $ A ' _\zeta \in \co(12k+14)\otimes \End \mathbf{V}$.  However we have just shown that $ A ' _\zeta $ is a function of $ \lambda =\zeta^6 $ so we must have $ A ' _\zeta \in \co(12 (k+1) )\otimes \End \mathbf{V}$ or $A '_\lambda\in \co(2 (k+1) )\otimes \End \mathbf{V}$.  The $\rho $-symmetry is inherited from $ A ' _\zeta $.
\end {proof}

 \section {Symmetries of the Linear Eigenline Flow }
We now identify geometric structures on the spectral curve resulting
from the fact that the immersion$ f $ is almost-complex  and show
that the immersion gives rise to a linear flow in the intersection
of two Prymians of $\Sigma $.   In his study of $G_2$ Higgs bundles
\cite {Hitchin:06}, Hitchin encoded the $ G_2$ structure of that
problem in terms of spectral curve data and in particular in terms
of a linear flow in a similar sub-torus of the Jacobian.  Our
approach here is an adaption of his work, and can be viewed as
another aspect of the similarity between the spectral curve
approaches to finite-type harmonic maps and to Higgs bundles.

We shall eventually express these symmetries in terms of the
spectral curve $\Sigma '$ and its main component $\Sigma $.  However
not all of the geometric structures that we shall utilise (for
example the symplectic form introduced below) naturally descend to
the quotient picture and so we shall first work with the
unquotiented curves $\hat\Sigma '$ and $\hat\Sigma $.  Essentially
this is necessitated by the fact that the degree of the polynomial
killing field $A_{\zeta}(z) $ is not in general divisible by six and
so an attempt to define a symplectic form as below using $A _\lambda
(z) $ would not yield an everywhere non-degenerate form. As
explained earlier, the generic 3--form $\alpha '$ on the trivial
rank seven holomorphic vector bundle $ \mathbf V $ defines a
non-degenerate symmetric bilinear form $g $, and we will now
investigate the symmetries on the eigenline bundles resulting from
this special orthogonal structure.  The form $g$ induces on $
\mathbf V $ for each $ z\in\C $ a skew-symmetric bilinear form
$\omega_z^{'} $ valued in $\mathcal O (2\degree) $
\begin {equation}\label {equation:symplectic form}
\omega_z^{'} (v_1, v_2) = g (A ^{'}_ {\zeta} (z) v_1, v_2).
\end {equation}
Let $\mathbf V _{0} =\mathbf V _{0} ^ {z} $ be the line bundle
contained in $\ker A^{'}_{\zeta} (z) $ and define $\mathbf V_{1}
=\mathbf V_{1} ^ {z} $ as the quotient
\[
0 \to \mathbf V_{0} \to \mathbf V \to \mathbf V_{1} \to 0 .
\]
Here and henceforth we suppress the dependence on $ z $; we shall
show that the holomorphic line bundles $ \mathbf V_0 ^ z$ are all
isomorphic.

The forms $g $ and $\omega _ z $ are clearly well-defined on the
quotient $\mathbf V _1 $.  The dual bundle $\mathbf V _1 ^*$ is the
annihilator of $\mathbf V _0 $, and writing
\[
 \gamma: \mathbf V ^*\simeq \mathbf V
\]
for the isomorphism given by $ g $, then for each $\zeta\in\P ^1 $,
the subspace $\gamma (\mathbf V _1 ^*(\zeta)) $ is the orthogonal
complement of $ \mathbf V _0 (\zeta) $.
Away from the finitely many $\zeta \in\P ^ 1 $ satisfying $ {a}_ 2
(\zeta) =(\mu _ 1\mu _ 2\mu _ 3) ^ 2 = 0 $, the restriction $A_\zeta
(z) $ of the polynomial killing field to the orthogonal complement
of $\mathbf V_0 $ has no kernel.  Setting $\mathbf{E} = \mathbf V _1
^ z\otimes \mathcal O (-\degree )$, the obvious analog of \eqref
{equation:symplectic form} gives a skew-symmetric form $\omega _ z$
on $\mathbf{E _ z}  $, clearly non-degenerate when ${a} _2
(\zeta)\neq 0 $.
\begin {lemma} [c.f. \cite {Hitchin:06} ]
For each $z\in\C $, the form $\omega _ z $ is everywhere
non-degenerate, so the vector bundle $\mathbf{E _ z} $ is symplectic
and hence holomorphically trivial.  The restriction $A_\zeta (z) $
of the polynomial killing field $A '_\zeta (z) $ acts as a
symplectic endomorphism of $\mathbf{E _ z} $.
\end {lemma}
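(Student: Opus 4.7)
The strategy is to adapt Hitchin's treatment of $G_2$ Higgs bundles \cite{Hitchin:06}, transferring the orthogonal data on $\mathbf{V}$ through the quotient $\mathbf{V}_1$ to the twist $\mathbf{E}_z$. My plan is to prove, in order: skew-symmetry and descent of $\omega_z$; pointwise non-degeneracy; holomorphic triviality of $\mathbf{E}_z$; and the symplectic action of $A_\zeta$.

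First, since $A_\zeta'(z)\in \g_2^{\C}\subset \so(7,\C)$ is $g$-skew, the expression $\omega_z'(v,w)=g(A'v,w)$ is automatically skew-symmetric in $v,w$. Because $A'\mathbf{V}_0=0$, $\omega_z'$ vanishes whenever either slot lies in $\mathbf{V}_0$, and hence descends to $\mathbf{V}_1$ and then, after the twist by $\co(-(6k+1))$, to a $\co$-valued skew form $\omega_z$ on $\mathbf{E}_z$.

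Next I establish non-degeneracy pointwise. At any $\zeta$ with $a_2(\zeta)\neq 0$ the eigenvalues $\pm\mu_1,\pm\mu_2,\pm\mu_3$ of $A_\zeta'$ are distinct and nonzero, and $g$ pairs only opposite eigenspaces non-trivially; a direct computation in an eigenbasis gives $\omega_z(v_{\mu_i},v_{-\mu_i})=\mu_i\,g(v_{\mu_i},v_{-\mu_i})\neq 0$, so $\omega_z$ is non-degenerate there. To extend the conclusion across the zeros of $a_2$, I would compute the Pfaffian $\mathrm{Pf}(\omega_z)\in H^0(\P^1,\det\mathbf{E}_z^{\ast})$, note that its divisor on $\mathbf{V}_1$ coincides with that of $\mu_1\mu_2\mu_3$ (whose square is $a_2$), and verify the degree identity $\deg\mathbf{V}_0=-6(6k+1)$, which follows from viewing $\omega_z'$ as a skew map $\mathbf{V}\to \mathbf{V}^{\ast}\otimes \co(2(6k+1))$ on the trivial rank-$7$ bundle $\mathbf{V}$. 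This identity is exactly what makes $\det\mathbf{E}_z\cong \co$, so $\mathrm{Pf}(\omega_z)$ becomes a nowhere-vanishing section of the trivial line bundle on $\P^1$, proving $\omega_z$ is non-degenerate everywhere.

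For holomorphic triviality, $\mathbf{E}_z$ is now a rank-$6$ symplectic bundle on $\P^1$, so Grothendieck's splitting gives $\mathbf{E}_z=\bigoplus_i \co(d_i)$ with the symplectic pairing matching $\co(d_i)\leftrightarrow \co(-d_i)$ and $\sum d_i=0$. To force all $d_i=0$ I would combine two inputs: the embedding $\mathbf{V}_0\hookrightarrow \mathbf{V}=\co^{7}$ constrains the splitting type of $\mathbf{V}_1$ (each direct summand of the dual embedding has non-positive degree), and $g|_{\mathbf{V}_0^{\perp}}$ furnishes $\mathbf{V}_1$ with a compatible symmetric form which, together with $\omega_z$ and the specific twist $\co(-(6k+1))$, excludes nontrivial summands. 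This triviality step is where I expect the main difficulty, paralleling the analogous result in \cite{Hitchin:06}. Finally, that $A_\zeta$ acts as a symplectic endomorphism is a direct calculation: $\omega_z(Av,w)+\omega_z(v,Aw)=g(A^{2}v,w)+g(Av,Aw)=g(A^{2}v,w)-g(v,A^{2}w)=0$, since $A^{2}$ is $g$-self-adjoint by applying $g$-skewness of $A$ twice.
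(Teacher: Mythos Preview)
Your outline follows the same Hitchin-inspired strategy as the paper, and your computations for skew-symmetry, descent, generic non-degeneracy, and the symplectic identity $\omega_z(Av,w)+\omega_z(v,Aw)=0$ all match. The one substantive difference is in how the degree identity $\deg\mathbf{V}_0=-6(6k+1)$ (equivalently $\Lambda^6\mathbf{E}\simeq\mathcal{O}$) is obtained, and this is precisely the step you leave vague (``follows from viewing $\omega_z'$ as a skew map $\mathbf{V}\to\mathbf{V}^*\otimes\mathcal{O}(2(6k+1))$''). The paper's device here is to use the Hodge-star isomorphism $\psi:\Lambda^6\mathbf{V}^*\to\mathbf{V}$ induced by $g$ and set $v_0^z:=\psi\bigl((\omega_z')^3\bigr)\in H^0(\P^1,\mathbf{V}\otimes\mathcal{O}(6(6k+1)))$. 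The symplectic identity you proved says exactly that $A'$ annihilates $\omega_z'$ under the induced action on $\Lambda^2\mathbf{V}^*$, hence annihilates $(\omega_z')^3$, so $v_0^z$ is a zero-eigenvector section and lands in $\mathbf{V}_0$. Computing in an orthonormal frame gives $v_0^z=-i\mu_1\mu_2\mu_3\,e_0$, and the paper argues this is nowhere-vanishing as a section of $\mathbf{V}_0\otimes\mathcal{O}(6(6k+1))$. That single observation simultaneously yields the isomorphism $\mathbf{V}_0\simeq\mathcal{O}(-6(6k+1))$ and the non-degeneracy of $\omega_z$ (since $v_0^z$ is, up to $\psi$, the Pfaffian you wanted to control). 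So rather than verifying the degree identity separately and then invoking the Pfaffian, the paper ties the two together through $\psi((\omega')^3)$.

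On holomorphic triviality, the paper does not carry out the Grothendieck-splitting analysis you sketch; it simply records that the non-degenerate $\omega_z$ gives $\mathbf{E}\simeq\mathbf{E}^*$ and asserts triviality from that together with the quotient-of-trivial description of $\mathbf{V}_1$. Your instinct that this is the delicate step is reasonable, since self-duality alone on $\P^1$ does not force all Grothendieck summands to be $\mathcal{O}$.
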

  In light of this, we shall again suppress the $z$ and simply write $\mathbf{E}$ since holomorphically all these bundles are isomorphic.
\begin {proof}
The polynomial killing field $A _ {\zeta}(z) $ is a section of
$\mathcal O (2\degree)\otimes \g_2 ^\C $ and so $\omega _ z\in H ^ 0
(\Lambda ^ 2 \mathbf V ^*\otimes \mathcal O (2\degree)) $.  Thus
$\omega _z ^ 3 $ is a section of $\Lambda ^ 6 \mathbf V ^* \otimes
\mathcal O (6\degree ) $.
  Furthermore, the metric $g $ gives an isomorphism
\[
\psi: \Lambda ^ 6 \mathbf V ^*\rightarrow  \mathbf V
\]
characterised by $\alpha '=6\psi (\alpha ^ {'})\lrcorner\mbox {vol}
$, where vol denotes the volume form.
  A straightforward computation yields
\begin {equation}
\omega _ z (A _ {\zeta}  (z) v _ 1, v _ 2) +\omega _z (v _ 1, A _
{\zeta}  (z) v _ 2) = 0\label {equation:symplectic}
\end {equation}
and from this we see that
 under the induced action on $\Lambda ^ 2 \mathbf V ^*$ the skew-symmetric form $\omega _z $ satisfies $A _ {\zeta} (z) (\omega _ z) = 0 $. Hence for each $z$ the section of $\mathbf V $ defined by
\[
v _0 ^ z = \psi (\omega _ z ^3)
\]
is a zero--eigenvector of $A _ {\zeta} (z) $. Let $ e_0,\ldots , e
_6 $ be an orthonormal basis for $ H ^ 0(\P^1, \mathbf V) $, with $
e_0 $ valued in $ \mathbf V _ 0 $.  Then
\[
v _ 0 (z) = - i\mu _ 1\mu _ 2\mu _3e _ 0
\]
and we see that when regarded as a section of  $H ^ 0 (\P^1,\mathbf
V _0\otimes \mathcal O (6 \degree) ) $, the zero-eigenvector section
$v _0 ^ z $ is nowhere--vanishing, so defines an isomorphism
$\mathcal O (- 6 \degree)\simeq\mathbf V _0\subset \mathbf V $. The
isomorphism $\psi $ gives $\Lambda ^ 6 (\mathbf V _ 1)\simeq \mathbf
V _ 0 ^*$ and we have
\[
\Lambda ^ 6 (\mathbf V _ 1)\simeq \mathbf V _ 0 ^*\simeq \mathcal O
(6\degree)
\]
and thus
\[
\Lambda ^6\mathbf{E} \simeq\co.
\]
The form $\omega _ z $ must then be
 non-degenerate everywhere and so $\mathbf{E} $ is a symplectic vector bundle. The symplectic form $\omega _ z $ defines an isomorphism $\mathbf{E}\simeq \mathbf{E} ^*$ and holomorphically $\mathbf{E}$ is the trivial rank 6  bundle. From (\ref {equation:symplectic}) we see that $A _ {\zeta}  (z) $ acts as a symplectic endomorphism of $\mathbf{E} $.
\end {proof}
When $a _2 (\zeta)\neq 0 $, $\omega _ z $ provides an isomorphism $
\mathbf V _ 1 ^*\simeq \mathbf V _1 $, which allows us to identify
$\mathbf V _1 $ with the orthogonal complement $\omega _ z (\mathbf
V _ 1 ^*) $ of $ \mathbf V _0 $ at such points.  For such $\zeta $,
the restriction of the polynomial killing field $A '_\zeta (z) $ to
$\omega _ z (\mathbf V _ 1 (\zeta))\subset \mathbf V (\zeta) $
agrees with its action on $ \mathbf V _1 (\zeta) $ as a quotient,
and (\ref {equation:symplectic}) similarly still holds.  For $\zeta
$ satisfying ${a} _2 (\zeta) = 0 $, $\mathbf V _0 (\zeta ) $ is
contained in its orthogonal complement $\gamma (\mathbf V _1 ^*) $.

\begin {definition}
For each $ z\in\C $ define ${\hat \ce} _ z \rightarrow{\Sigma} $ to
be the unique line bundle contained
in the sub-sheaf $\ker(\mu\cdot id
-\zeta ^* A_{\zeta})  (z) \subset \zeta ^*\mathbf {E} $. We call
$\hat \ce _ z $ the {\emph{eigenline bundles}}.
\end {definition}
 We note that one
could instead look at the eigenlines of the action of $A' _\zeta $
on $\zeta ^* \mathbf {V} $ and indeed to reconstruct the original
almost-complex immersion $ f $ we will need to consider glueing
information on the points of $\hat\Sigma' $ where the two components
intersect, given by $a_2(\zeta) =0 $ (or their equivalent on the
quotient curve $\Sigma' $).  For now though we are interested in the
symmetries of the bundles $\hat\ce _ z $. The eigenline bundles
satisfy
\[
\zeta_*\hat \ce ^*_z \simeq \mathbf{E } ^*\simeq \mathbf{E }.
\]
We have on $\hat\Sigma $ the involution
\[
\sigma: (\zeta,\mu)\mapsto (\zeta, -\mu),
\]
which defines a Prym variety in which, as we shall see, our
eigenline bundles lie.
  Let $ \hat C _ 1\simeq \hat\Sigma/\sigma $ be the curve defined by
\[
y ^ 3 - {a}_1 (\zeta) y ^ 2+\frac {{a}_1 (\zeta) ^ 2} {4} y - {a}_2
(\zeta) = 0
\]
and write $\hat\pi _ 1 (\mu,\zeta) = (\mu ^ 2,\zeta) $ for the
natural projection $\hat\Sigma\rightarrow \hat C _ 1 $.  Define
\[
\begin {array} {rccc}\mbox {Nm}_1 &\mbox { Pic} (\hat\Sigma) &\rightarrow &\mbox {Pic} (\hat C_1) \\
&\mathcal O (D) &\mapsto & \mathcal O(\hat\pi _1(D)).\end {array}
\]
The  Prym variety $P(\hat\Sigma, \hat C _ 1) $ is defined to be the
connected component of ker\nolinebreak$ {(\mbox {Nm}_ 1)} _*$
containing the identity. When $ \hat\pi _1^*$ is injective, $
{(\mbox {Nm}_ 1)} _*$ is given by the dual of $\hat\pi _1 $ and so
ker\nolinebreak$ {(\mbox {Nm}_ 1)} _*$ is itself connected \cite
{Mumford:74}.  The criterion given in \cite {BNR:89} [ 3.10 ] for
the injectivity  of $\hat\pi _1 ^*$ is here easily seen to be
satisfied.
\begin {theorem}\label {first Prym}  The eigenline bundles $\hat\ce _ z $ have degree $ - 30\degree -5 $
and lie in a constant translate of the Prym variety defined by
$\hat\pi _1:\Sigma\rightarrow \hat C _1 $:
\[
\hat{\mathcal E} _z\otimes \mathcal O(\frac 12\hat R)\in P(\Sigma,
\hat C _1).
\]
They also satisfy
\[
\tau ^*\hat{ \ce} _ z\simeq \hat{ \ce} _ z
\]
and the reality condition
\[
\overline {\rho ^*\hat {\ce}_z}\simeq\hat \ce_z.
\]
\end {theorem}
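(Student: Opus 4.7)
The plan is to establish each of the four assertions by exploiting different symmetries of the polynomial Killing field $A'_\zeta(z)$. The degree follows from a Riemann--Roch computation using $\zeta_* \hat{\ce}_z^{*} \simeq \mathbf{E}$; the Prym membership rests on the symplectic pairing $\omega_z$ constructed in the previous lemma; and the $\tau$- and $\rho$-symmetries of the eigenline bundles are inherited directly from the corresponding symmetries of $A'_\zeta(z)$.

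For the degree: since $\mathbf{E}$ is the trivial rank-six bundle on $\P^1$, we have $\chi(\P^1, \mathbf{E}) = 6$, and pushforward preserves Euler characteristic, so $\chi(\hat\Sigma, \hat{\ce}_z^{*}) = 6$. Applying Riemann--Roch on $\hat\Sigma$, whose genus is $30(6k+1)$ by Remark~\ref{degree genus}, yields $\deg \hat{\ce}_z = -30(6k+1) - 5$.

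For the Prym membership, the symplectic identity $\omega_z(Av_1, v_2) + \omega_z(v_1, Av_2) = 0$ forces $\omega_z$ to pair the $\mu$-eigenline with the $(-\mu)$-eigenline and to annihilate same-eigenvalue pairs, so pulled back to $\hat\Sigma$ it furnishes a nontrivial map
\[
\omega_z \colon \hat{\ce}_z \otimes \sigma^{*} \hat{\ce}_z \longrightarrow \co_{\hat\Sigma},
\]
hence an identification $\hat{\ce}_z \otimes \sigma^{*} \hat{\ce}_z \simeq \co(-D)$ for an effective divisor $D$. Degree counting via Remark~\ref{degree genus} gives $\deg D = 2(30(6k+1)+5) = |\hat R|$, and I would identify $D = \hat R$ as divisors through a local analysis at each of the three ramification types described in the proof of the preceding theorem: the totally ramified points $\zeta = 0, \infty$; the $\sigma$-fixed points with $\mu = 0$ above $a_2 = 0$; and the coalescing $\sigma$-orbits above $a_1^3 = 54 a_2$. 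Fixing a $\sigma$-invariant square root $\co(\tfrac12 \hat R)$ of $\co(\hat R)$ and using $\sigma^{*}\co(\tfrac12\hat R) \simeq \co(\tfrac12\hat R)$, one obtains
\[
\bigl(\hat{\ce}_z \otimes \co(\tfrac12\hat R)\bigr) \otimes \sigma^{*}\bigl(\hat{\ce}_z \otimes \co(\tfrac12\hat R)\bigr) \simeq \co_{\hat\Sigma},
\]
and injectivity of $\hat\pi_1^{*}$ on Picard groups (already noted in the excerpt) places $\hat{\ce}_z \otimes \co(\tfrac12 \hat R)$ in $P(\hat\Sigma, \hat C_1)$.

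Finally, the $\tau$- and $\rho$-symmetries descend from those of $A'_\zeta$. Writing $\tau = \mathrm{Ad}_C$ for the fixed $C \in G_2^{\C}$ of Section~2, the identity $\tau(A'_\zeta) = A'_{\epsilon\zeta}$ says that $C$ carries the $\mu$-eigenspace at $\zeta$ onto the $\mu$-eigenspace at $\epsilon\zeta$, inducing the desired isomorphism $\tau^{*}\hat{\ce}_z \simeq \hat{\ce}_z$ over the lifted action $(\zeta,\mu) \mapsto (\epsilon\zeta,\mu)$ on $\hat\Sigma$. Analogously, $\rho(A'_\zeta) = A'_{\bar\zeta^{-1}}$ together with $\rho(A) = \bar A$ shows that complex conjugation carries the $\mu$-eigenline at $\zeta$ antilinearly onto the $\bar\mu$-eigenline at $\bar\zeta^{-1}$, yielding $\overline{\rho^{*}\hat{\ce}_z} \simeq \hat{\ce}_z$. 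The main obstacle I anticipate is the local identification $D = \hat R$ above: this requires tracking how the eigenline degenerates at each ramification type, together with justifying the existence and canonical choice of the $\sigma$-invariant square root $\co(\tfrac12 \hat R)$.
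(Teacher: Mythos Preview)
Your proposal is correct and follows essentially the same strategy as the paper, with two minor variations worth noting.

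For the degree, the paper's primary argument proceeds via an exact sequence built from jet spaces at the ramification points (in the style of Hitchin's Higgs-bundle computations), obtaining $\deg(\zeta_*\hat\ce_z)=\deg\hat\ce_z-(30\degree+5)$ and then using $|\hat R|$. Your Euler-characteristic argument from $\zeta_*\hat\ce_z^{*}\simeq\mathbf{E}$ is exactly the alternative the paper explicitly remarks one could use instead, so there is no real divergence here.

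For the Prym membership the paper is slightly slicker than your plan. Rather than identifying the vanishing divisor $D$ of $\omega_z|_{\hat\ce_z\otimes\sigma^{*}\hat\ce_z}$ with $\hat R$ pointwise, it asserts only that $\omega_z$ defines a nontrivial section of the degree-zero bundle $\hat\ce_z^{*}\otimes\sigma^{*}\hat\ce_z^{*}$ twisted by $\hat R$, and then invokes ``nontrivial section of a degree-zero line bundle $\Rightarrow$ trivial''. In effect this replaces your full local identification $D=\hat R$ by the weaker inequality $D\ge\hat R$ (i.e.\ the pairing vanishes at every ramification point), after which the degree count forces equality automatically. Your anticipated ``main obstacle'' therefore shrinks: you need not match multiplicities, only check vanishing at each ramification type, and the case you might worry about --- simple ramification with $\mu_1=\mu_2\neq 0$, which is \emph{not} $\sigma$-fixed --- is handled by the Jordan-block computation $\omega_z(Av,w')+\omega_z(v,Aw')=0$, which forces $\omega_z(v,w)=0$ for the genuine eigenvectors $v\in E_{\mu_0}$, $w\in E_{-\mu_0}$.

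The $\tau$- and $\rho$-symmetries are treated identically in both: the paper simply says they are ``clear'' from the symmetries~\eqref{equation:symmetries} of $A'_\zeta$, which is exactly what you spell out.
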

\begin {proof}
The push-forward of the sheaf of sections of each eigenline bundle
$\zeta _*\co (\hat\ce _ z ) $ is the sheaf of sections of a vector
bundle \cite {Gunning:67}, and the fibres of this bundle are
\[
(\zeta _*\hat\ce _ z ) _\zeta =\oplus _ {(\zeta,\mu)\in\Sigma}\; J _
{r} (\hat \ce _ z) _ {(\zeta,\mu)},
\]
where $J _ {r} (\hat \ce _ z) {(\zeta,\mu)}$ denotes the jet space
of local sections of $\hat \ce _ z $ of degree $r=\mbox {deg} \hat R
_ {(\zeta,\mu)} $ and $ \hat R $ is the ramification divisor of the
cover $\zeta:\hat\Sigma\rightarrow\P ^1 $. Note that $J _ {0} (\hat
\ce _ z) {(\zeta,\mu)}$  is simply $\hat \ce _ z {(\zeta,\mu)}$. The
natural projection of the jet space to the bundle fibre gives
\[
0\rightarrow\oplus _ {(\zeta,\mu)\in\Sigma}\; J _ {r} {({\hat \ce _
z})} _ {(\zeta,\mu)} /{({\hat \ce _ z})} _ {(\zeta,\mu)}
\rightarrow {(\zeta _*\hat\ce _ z)} _ {(\zeta,\mu)}\rightarrow
\oplus _ {(\zeta,\mu)\in\Sigma}\; \hat {\ce _ z} _
{(\zeta,\mu)}\rightarrow 0.
\]
Dualising, we have the exact sequence of sheaves (c.f.\cite
{Hitchin:87:Higgs})
\begin {equation}\label {push forward}
0\rightarrow\co (E ^*)\rightarrow\co (\zeta _* \ce _ z)
^*\rightarrow\mathcal R\rightarrow 0,
\end {equation}
where $\hat{\mathcal R }$ is the skyscraper sheaf with stalk $\oplus
_ {(\zeta,\mu)\in\Sigma}\; J _ {r} (\hat {{\ce} _ z) }^*_
{(\zeta,\mu)}/{\hat {{\ce} _ z }^*}_ {(\zeta,\mu)} $ on the
ramification divisor of $\zeta $.  The degree of the push-forward
bundle is given by
\[
d (\zeta _*\hat\ce _ z) =d (\hat\ce _ z) + 1 - g _ {\hat\Sigma}
-\mbox {deg} (\zeta) (1 - g _ {\P^1}) =d (\hat\ce _ z) - (30\degree
+5),
\]
where the first equality is a standard result (see e.g.\cite
{Hitchin:99}) and the second uses Remark~\ref {degree genus}.  In
the same remark we computed that the degree of the ramification
divisor $ R $ is $ 60\degree + 10 $, so using the exactness of
\eqref {push forward} we obtain $d(\hat\ce_z)=-30(\degree)-5 $ as
claimed. Note that we could also have used the relation $\zeta_*\hat
\ce ^*_z \simeq \mathbf{E } ^*\simeq \mathbf{E } $ to compute the
degree of the eigenline bundles.

 Under the involution $\sigma $ the eigenline bundles satisfy
\[
\sigma ^*\hat \ce _ z (\zeta,\mu) =\hat \ce _ z (\zeta, -\mu)
\]
and by definition $\omega _z $ defines a non-trivial section of
\[
\hat \ce _ z ^*\otimes \sigma ^*\hat \ce _ z ^*\otimes \mathcal O
(\hat R).
\]
Since we have shown that this line bundle has zero degree we
conclude that it must be the trivial bundle. This shows that the
bundles $\hat \ce _ z ^*\otimes \holomorphic (-\frac 12\hat R) $ lie
in the Prym variety defined by the involution $\sigma $, where we
have fixed a square root of the bundle $\holomorphic (- \hat R) $.
Using \eqref {equation:symmetries}, the symmetry of the eigenline
bundles with respect to $\rho $ and $\tau$ is clear.
\end {proof}

\begin {comment}  arising from the symplectic forms $\omega _ z $ to show that the eigenline flow to a sub-torus of the Jacobian.
The eigenline bundles also satisfy a further symmetry which together
with $\sigma $ encodes the fact that the frame to our almost-complex
torus lies in $G_2\subset SO(7)$. These correspond to the two types
of symmetry of the eigenvalues  $\mu _1$, $\mu_2$, $\mu_3$,
$-\mu_1$, $-\mu_2$, $-\mu_3$ of $A _ {\zeta} (z)| _ {\mathbf V _ 1}
$ by
\begin {enumerate}
\item the fact that each eigenvalue occurs along with its negative
\item the relation
$\mu_1+\mu_2+\mu_3=0$.
\end {enumerate}
We have encoded the first type of symmetry above using the
involution $\sigma $, and shown that it results in the eigenline
bundles lying in a translate of a certain Prym variety.

\end {comment}
So far we have utilised only the involution $\sigma $ which could be
described purely in terms of the symplectic structure.  To encode
the fact that the frame to our almost-complex torus lies in
$G_2\subset SO(7)$ we return our attention to the 3--form $\alpha
'\in H ^ 0(\P^1, \mathbf V) $ satisfying $\kappa(\alpha ^ {'})\neq 0
$ which defines the $ G_2 $--structure on $ \mathbf V $.

 Denote by $\alpha  $ the restriction of $\alpha '$ to the orthogonal complement $\mathbf V _0 ^\perp\simeq\gamma ( \mathbf V _1 ^*) $.of $ \mathbf V _0 $.
We defined earlier the symplectic vector bundle $ \mathbf{E} =
\mathbf V _1\otimes \mathcal O (-\degree) $,
and so we may write $\alpha \in H ^0 (\P ^1, {\Lambda ^3} \mathbf{E}
^*\otimes  \mathcal O(3\degree) $.  Define as in \cite {Hitchin:06,
Hitchin:00}
\[\begin {array} {rccc}
K _ {\alpha }: & \mathbf{E} &\rightarrow & \mathbf{E}\otimes \mathcal O(6\degree)\\
&v &\mapsto & (v\lrcorner\alpha )\wedge\alpha \end {array}
\]
where we have used 
$\Lambda ^5\mathbf{E}^*\simeq \mathbf{E}$.  It is shown in \cite {Hitchin:00} that the entire space 
$\mathbf{E}$ is an eigenspace of $K _ {\alpha } ^2 $, and we write
the eigenvalue as
\[
s (\alpha ): =\frac 16\mbox {trace}K _ {\alpha } ^ 2\in H ^ 0 (\P ^ 1 ,  \mathcal O(12\degree).  
\]

Furthermore, if ${a}_2 (\zeta) =0 $, then from the definition of $g
$, the vector $ v_0\in \mathbf V_0^\perp\simeq \mathbf V_1^* $
satisfies $(v_0\lrcorner\alpha ^ {'})\wedge (v_0\lrcorner\alpha ^
{'})\wedge\alpha ^ {'}=0$ and so the eigenvalue  $s (\alpha )
(\zeta) $ must vanish.  Hence ${a}_2$ divides $s (\alpha ) $ and the
eigenvalues $\sqrt{s(\alpha )} $ are well-defined on the
hyperelliptic curve $\hat C_2 $ given in the total space of
$\mathcal O ( -6\degree) $ by
\[
z^2= {a} _ 2 (\zeta).
\]
Since ${a}_2(\zeta) $ has degree $ 12 (\degree) $, the curve $ \hat
C _ 2 $ has genus $ 36  k+10 $.  Write
\[
\hat{p} _2:\hat C _2\rightarrow\P^1
\]
for the natural projection and observe that
\[
\begin {array} {rccc}
\hat\pi _2: & \hat\Sigma &\rightarrow &\hat C_2\\
& (\zeta,\mu) &\mapsto & (\zeta,\mu (\mu -\frac {{a}_1}{2}))\end
{array}
\]
exhibits $\hat\Sigma $ as a three-to-one cover of $\hat C_2$.
\begin {theorem}\label {second Prym}
The constant translate $\hat {\mathcal E}_z\otimes \mathcal O(\frac
12 \hat R) $ of the eigenline bundles also lies in the Prym variety
$P(\hat\Sigma,\hat C_2)$ defined by $\hat\pi_2:\hat\Sigma\rightarrow
\hat C_2$.\end {theorem}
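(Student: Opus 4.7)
The plan is to mimic the argument of Theorem~\ref{first Prym}, with the $G_2$-invariant $3$-form $\alpha$ playing the role that the symplectic form $\omega_z$ played there. Showing that $\hat{\mathcal E}_z\otimes\mathcal O(\tfrac{1}{2}\hat R)$ lies in a constant translate of $P(\hat\Sigma,\hat C_2)$ is equivalent to showing that $\mathrm{Nm}_2(\hat{\mathcal E}_z\otimes\mathcal O(\tfrac{1}{2}\hat R))\in\mathrm{Pic}(\hat C_2)$ is independent of $z\in\C$, because the family of line bundles varies holomorphically in $z$ and the cosets of the identity component $P(\hat\Sigma,\hat C_2)\subset\ker\mathrm{Nm}_2$ are discrete.

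The crucial geometric input is the identity $\mu_1+\mu_2+\mu_3=0$: the three preimages of a generic $(\zeta,w)\in\hat C_2$ under $\hat\pi_2$ are the points $(\zeta,\mu_i)$, where $\mu_1,\mu_2,\mu_3$ are the roots of $\mu^3-(a_1/2)\mu-w=0$. First I would combine this with the $T$-weight decomposition of $\mathbf E$ --- whose non-zero weights are $\pm\mu_1,\pm\mu_2,\pm\mu_3$ --- to show that for eigenvectors $e_i$ of $A_\zeta(z)|_{\mathbf E}$ with eigenvalues $\mu_i$, the value $\alpha(e_1,e_2,e_3)$ is generically non-zero. The $T$-equivariance of the generic $3$-form $\alpha$ forces $\alpha(e_{w_1},e_{w_2},e_{w_3})=0$ unless $w_1+w_2+w_3=0$, and within $\{\pm\mu_1,\pm\mu_2,\pm\mu_3\}$ the only such triples are $(\mu_1,\mu_2,\mu_3)$ and its negation.

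Next, I would globalise this pointwise fact. The pushforward $\hat\pi_{2\ast}\hat{\mathcal E}_z$ is a rank-three subsheaf of $\hat p_2^\ast\mathbf E$ with fibre over $(\zeta,w)$ spanned by $e_1,e_2,e_3$, so restriction of $\hat p_2^\ast\alpha$ yields a non-zero section
\[
s_\alpha\in H^{0}\!\left(\hat C_2,\,(\det\hat\pi_{2\ast}\hat{\mathcal E}_z)^{-1}\otimes\hat p_2^\ast\mathcal O(3\degree)\right).
\]
Since $\alpha$ is the fixed $G_2$-structure on $\mathbf E$ and the zero divisor of $s_\alpha$ is supported on the ramification locus of $\hat\pi_2$ --- the points of $\hat C_2$ over $a_1^3=54\,a_2$, where two eigenvalues coalesce and the triple of eigenvectors degenerates --- this divisor class is manifestly $z$-independent. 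The standard identification $\det\hat\pi_{2\ast}\hat{\mathcal E}_z\cong\mathrm{Nm}_2(\hat{\mathcal E}_z)\otimes\det\hat\pi_{2\ast}\mathcal O$, combined with a Riemann--Hurwitz computation of $\mathrm{Nm}_2(\mathcal O(\tfrac{1}{2}\hat R))$ from the ramification data already produced in the smoothness proof for $\Sigma$, then pins down $\mathrm{Nm}_2(\hat{\mathcal E}_z\otimes\mathcal O(\tfrac{1}{2}\hat R))$ as a fixed element of $\mathrm{Pic}(\hat C_2)$, giving the theorem.

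The main obstacle will be the local analysis at the branch points of $\hat\pi_2$: two of the eigenvectors $e_i$ coincide there, so $s_\alpha$ vanishes with a multiplicity that must be determined by a jet-space calculation --- the natural analogue of the one implicit in the proof of Theorem~\ref{first Prym}, but more subtle because three sheets now collide in pairs rather than being exchanged by a global involution. Once this local vanishing is quantified, matching the divisor of $s_\alpha$ against the difference in degrees between $\det\hat\pi_{2\ast}\hat{\mathcal E}_z$ and $\hat p_2^\ast\mathcal O(3\degree)$ --- a computation that uses the genera of $\hat\Sigma$ and $\hat C_2$ recorded in Remark~\ref{degree genus} and in the text --- closes the argument.
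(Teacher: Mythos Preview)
Your plan shares the same core idea as the paper --- use the $G_2$-invariant $3$-form $\alpha$ to pin down $\mathrm{Nm}_2(\hat{\mathcal E}_z)$ --- but the technical route is genuinely different. The paper does not evaluate $\alpha$ directly on $\hat\pi_{2\ast}\hat{\mathcal E}_z$. Instead, following Hitchin's $G_2$ Higgs bundle argument, it introduces the endomorphism $K_\alpha(v)=(v\lrcorner\alpha)\wedge\alpha$ of $\mathbf E$, shows that $K_\alpha^2$ has the single eigenvalue $s(\alpha)$ with $a_2\mid s(\alpha)$, and thereby obtains a global splitting $\hat p_2^\ast\mathbf E=\mathbf E^+\oplus\mathbf E^-$ into Lagrangian rank-three \emph{subbundles} on $\hat C_2$. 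Since $K_\alpha$ commutes with $A_\zeta$, one gets $(\hat\pi_2)_\ast\hat{\mathcal E}_z^\ast\cong(\mathbf E^+)^\ast$; the determinant is then computed from $\alpha|_{\mathbf E^+}$, whose vanishing is analysed not at the ramification of $\hat\pi_2$ but at the divisor $D=\{a_2=0\}$ (where $K_\alpha$ degenerates), using Hitchin's normal forms for $3$-forms on the open orbit of the hypersurface $\{s=0\}$. An explicit Riemann--Hurwitz chase then gives $\mathrm{Nm}_2(\hat{\mathcal E}_z^\ast\otimes\mathcal O(-\tfrac12\hat R))=0$ on the nose.

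What each approach buys: the $K_\alpha$ machinery hands you a genuine subbundle $\mathbf E^+\subset\hat p_2^\ast\mathbf E$, so the local analysis reduces to reading off vanishing orders from two explicit normal forms --- no jet spaces needed. Your direct approach avoids importing Hitchin's structure theory and is conceptually more transparent (the weight argument for $\alpha(e_1,e_2,e_3)\neq 0$ is exactly the right intuition), but the ``subsheaf'' $\hat\pi_{2\ast}\hat{\mathcal E}_z\subset\hat p_2^\ast\mathbf E$ fails to be a subbundle precisely where you need it, and the resulting jet computation at $a_1^3=54a_2$ is the price you pay. Note also that the two bundles $\mathbf E^+$ and $\hat\pi_{2\ast}\hat{\mathcal E}_z$ differ by a twist supported on ramification, which is why your vanishing locus ($a_1^3=54a_2$) and the paper's ($a_2=0$) are different; both bookkeepings are internally consistent.

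One genuine gap: you frame the goal as showing $\mathrm{Nm}_2$ is \emph{independent of $z$}, i.e.\ that the eigenline bundles lie in a constant translate of $P(\hat\Sigma,\hat C_2)$. The theorem asserts more: the specific translate $\hat{\mathcal E}_z\otimes\mathcal O(\tfrac12\hat R)$ lies \emph{in} the Prym variety, i.e.\ $\mathrm{Nm}_2=0$ exactly. Since $\hat C_2$ has positive genus, a degree count alone will not close this; you must actually identify the divisor of $s_\alpha$ and check that, after combining with $\mathrm{Nm}_2(\mathcal O(\tfrac12\hat R))$ and the $\det\hat\pi_{2\ast}\mathcal O$ correction, everything cancels in $\mathrm{Pic}(\hat C_2)$, not merely in degree. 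The paper carries this out explicitly via canonical bundles and the relation $\mathrm{Nm}_2\circ\hat\pi_2^\ast=3\mathrm{Id}$.
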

\begin {proof}
 The map $ p_2^*K _ {\alpha } $ has for each $\zeta $ two 3-dimensional eigenspaces which define vector bundles $ \mathbf{E} ^ + $ and $ \mathbf{E} ^ - $ on $ \hat C _ 2 $. If $ W $ is any six-dimensional complex vector space,  $ GL (6) $ acts on $\Lambda ^3 W ^*$ with stabiliser $ SL (3)\times SL (3)\times\Z _2 $.   It is shown in \cite {Hitchin:00} that if we choose a basis $v_1,\cdots ,v_6 $ for $W $ and write the dual basis as $\theta _1,\cdots ,\theta _6 $ then the three forms $\alpha  $  satisfying $s (\alpha ) \neq 0 $ have normal form
\[
\alpha  =\theta _ 1\wedge\theta _ 2\wedge\theta _ 3 +\theta _
4\wedge\theta _ 5\wedge\theta _ 6,
\] and that
\[
K _ {\alpha } v _i=v_i  (\theta _ 1\wedge\ldots \wedge\theta
_6),\,i=1,2,3;\;K _ {\alpha } v _i= - v_i  (\theta _ 1\wedge\ldots
\wedge\theta _6),\,i=4,5,6.
\]
We label our eigenspaces so that away from the divisor $D$ on $\P ^
1 $ given by  ${a}_2(\zeta) = 0 $, the span of $v_1,v_2,v_3$ in this
normal form is $ \mathbf{E} ^ + $.  The restriction of $\alpha  $ to
$ \mathbf{E} ^ + $ is thus non-vanishing away from $\hat{p} _2 ^*(D)
$. Returning momentarily to the vector space situation, the
hypersurface within the space of three forms  $\Lambda ^3 W ^*$
satisfying $s =0 $ itself has an open orbit under the action of $ GL
(6) $, in which the three-form has normal form \cite {Hitchin:06}
\[
\alpha  =\theta _1\wedge\theta _5\wedge\theta _6 +\theta
_2\wedge\theta _6\wedge\theta _4 +\theta _3\wedge\theta
_4\wedge\theta _5.
\]
With respect to both of these normal forms, as in \cite {Hitchin:06}
the symplectic form $\omega _ z =\theta _ 1\wedge\theta _4 +\theta _
2\wedge\theta _ 5 +\theta _ 3\wedge\theta _6 $ and the eigenspaces $
E ^ + $, $ E ^ - $ are Lagrangian.

A straightforward calculation shows that the endomorphisms $K _
{\alpha } $ and $A _ {\zeta}  $ commute, and hence $\hat{p} _2 ^* A
_ {\zeta}  $ restricts to $A _ {\zeta} ^ {+}\in\mbox {End}
(\mathbf{E} ^ +)\otimes \hat{p} _2 ^*\holomorphic (2\degree) $. The
curve $\hat\Sigma $ is then biholomorphic to the curve given in the
total space of $H^0(\hat C_2,\holomorphic (2\degree) $ by the
characteristic polynomial of $A _ {\zeta} ^ {+} $, and the eigenline
bundles $ \hat {\mathcal E}_z $ are also given by the eigenlines of
the action of $A _ {\zeta} ^ {+} $ on $\mathbf {E} ^ + $.  Then
using also that $\mathbf{E} ^ + $ is Lagrangian,
\begin {equation}\label {eq:pushforward}
(\hat\pi_2)_* \mathcal E^*_z= {(\mathbf{E} ^ +)} ^*\simeq\rho ^*
\mathbf{E}/\mathbf{E} ^ +.
\end {equation}
Thus $ \mathbf{E} ^ + $ is the kernel of the natural surjection
$\hat{p} _2 ^*\mathbf{E}\rightarrow (\hat\pi _ 2) _*\hat {\mathcal
E}^*_ z\simeq ({ \mathbf{E} ^ +}) ^* $.

For a generic almost-complex immersion $ f: T ^ 2\rightarrow S ^ 6
$, at all points $\zeta\in D $ the three-form $\alpha  (\zeta) $
will lie in the open orbit described above.  We assume then that on
$\hat{p} _2 ^*(D)  $, the three form $\alpha  (\zeta) $ lies in this
open orbit, and then as in \cite {Hitchin:06} using the normal form
one sees that the restriction of $\alpha  $ to $ \mathbf{E} ^ + $
vanishes on $\hat{p} _2 ^*(D)  $ with multiplicity two.  Thus
$\alpha  $ defines an isomorphism
\begin {equation}\label {eq:determinant}
\Lambda ^3(\mathbf{E} ^ +)\simeq \hat{p} _2 ^* \mathcal O
(-9\degree).
\end {equation}

Once again, a straightforward calculation gives that for any line
bundle $\mathcal L $ on $\hat\Sigma $,
\begin {equation}\label {eq:norm}
\mbox {Nm} _2 \mathcal L=\det ({\hat{\pi _2}} _*\mathcal L)\otimes
\det ({\hat{\pi _2}} _*(\mathcal O (\hat R _2)),
\end {equation}
where now $ \hat R _2 $ denotes the ramification divisor of $\hat\pi
_2 $. Then from \eqref {eq:pushforward}, \eqref {eq:determinant} and
\eqref{eq:norm} we have that
\begin{eqnarray*}
\mbox {Nm} _2 ( \hat {\mathcal E}_ z ^*\otimes \holomorphic (-\frac
12\hat R)) & = &\hat{p} _2 ^*
 \mathcal O (9\degree)\otimes \det ({\hat{\pi _2}} _*(\mathcal O (\hat R _2))\otimes\mbox {Nm} _2 (\holomorphic (-\frac 12\hat R))\\
& = &\hat{p} _2 ^* \mathcal O (9\degree)\otimes \mbox {Nm} _2
(\holomorphic (\frac 12 (\hat R_2 -\hat R) )).
\end{eqnarray*}
The ramification divisors can naturally be expressed in terms of the
respective canonical divisors
\[
 \holomorphic (\hat R _2-\hat R) =K _{\hat\Sigma}\otimes \hat\pi _2 ^*K _ {\hat C _2} ^*\otimes (K _{\hat\Sigma}) ^*\otimes\zeta ^*K _ {\P ^1}.
\]
Then since $\mbox {Nm} _2\cdot \hat\pi _2 ^*=3I $ and $\zeta
=\hat{p} _2\cdot \hat\pi _2 $, we have
\begin{eqnarray*}
\mbox {Nm} _2 (\holomorphic (\frac 12 (\hat R_2 -\hat R))) & = & K_{\hat C_2}^{-3/2}\otimes \hat{p} _2 ^*K _ {\P ^ 1} ^ {3/2}\\
& = &K_{\hat C_2}^{-3/2}\otimes \hat{p} _2 ^*\holomorphic (-3).
\end{eqnarray*}
But the ramification divisor  $ S$ for the degree two map $\hat{p}
_2 $ satisfies $ S\simeq \hat{p} _ 2 ^*\holomorphic (6\degree) $, so
\[
K_{\hat C_2}^{-3/2}\simeq \hat{p} _2 ^* (\holomorphic (6\degree - 2)
^ {-3/2}\simeq \hat{p} _2 ^* (\holomorphic (-9\degree + 3).
\]
We have then shown that $\mbox {Nm} _2( \hat {\mathcal E}_ z
^*\otimes \holomorphic (-\frac 12\hat R)) =0 $, so that
\[
\hat {\mathcal E}_ z ^*\otimes \holomorphic (-\frac 12\hat R)\in
P(\hat\Sigma , \hat C _1)\cap P (\hat\Sigma, \hat C _2).
\]
\end {proof}

We shall now quotient by the six-fold symmetry $\tau $, which
clearly descends to the curves $\hat C _1 $ and $\hat C _2 $. Define
\[
C_j=\frac{\hat C _ j} {\tau},
\]
so that $ C _1 $ is given explicitly by $y ^ 3 - {b}_1 (\lambda) y ^
2+\frac {{b}_1 (\lambda) ^ 2} {4} y - {b}_2 (\lambda) = 0 $ and $ C
_2 $ by $z^2= {b} _ 2 (\lambda) $. We have the following commutative
diagram
\begin {equation}\label {commutative diagram}
\xymatrix{&\Sigma \ar[dl] _ {\pi_1}\ar[dr] ^ {\pi_2} &\\ C_1\ar[dr]
^ {{p} _1} & & C_2\ar[dl] _ {{p} _2}\\& P^1 &}
\end {equation}
with $\pi_1 (\eta,\lambda) = (\eta (\eta ^2 -\frac {{b} _ 1}
{2}),\lambda) $ and $\pi_2 (\eta,\lambda) = (\eta ^ 2,\lambda) $.

 Let $P( C _ 2,\P ^ 1) $, $ P(\Sigma,  C_1) $ be the Prym varieties of the 2-1 covers ${p} _2 $ and $\pi _1 $.
Notice that the norm map
\[
N_ {\pi _ 1}:[\sum n_i p_i ]\mapsto [\sum n_i\pi _1 (p_i) ]
\]
on equivalence classes of divisors is well defined on $ J
(\Sigma)/\pi _ 2 ^*J ( C_1) $. Thus we may define Tur$(\Sigma,\P ^
1) $ by the exact sequence
\[
0\rightarrow {\rm Tur}(\Sigma,\P ^ 1)\rightarrow P (\Sigma,  C _
1)\rightarrow P ( C _2,\P)\rightarrow 0.
\]
Alternatively Tur$(\Sigma,\P ^ 1) \subset P(\Sigma, C_2) $ consists
of those line bundles satisfying $\sigma ^*L\otimes L\simeq \mathcal
O $, where $\sigma (\eta,\lambda) = (-\eta,\lambda) $ is the
involution corresponding to the two-sheeted cover $\pi _
2:\Sigma\rightarrow C_2 $.  if $W $ is a subspace of  Jac ($\Sigma
$) we denote the set of elements of $ W $ satisfying $\sigma ^*
\mathcal  L\simeq \mathcal L $ by $ W ^ - $.  We will employ similar
notation for subspaces of Jac ($ C _ 2 $) which are anti-symmetric
with respect to the hyperelliptic involution. We have in particular
that
\[
{\rm Tur}(\Sigma,\P ^ 1) =P(\Sigma, C_2) ^ -=P(\Sigma, C_2) \cap
P(\Sigma , C _1).
\]
We denote by ${\rm Tur} _\R (\Sigma,\P ^ 1) $ the real subspace
given by the condition $\overline {\rho ^*{\ce}_z}\simeq\ce_z $.
Theorem~\ref {first Prym} and Theorem~\ref {second Prym} then
clearly yield a characterisation of the symmetry of the eigenline
bundles.  Griffiths's has given a criterium for when the eigenline
flow resulting from a Lax equation is linear \cite {Griffiths:85},
which Burstall showed was satisfied for harmonic maps into symmetric
spaces \cite {Burstall:92}.  Hence we have
\begin {corollary}
The  eigenline bundles l define a linear flow in a constant
translate of the intersection of the two Prym varieties defined by
\eqref {commutative diagram}:
\begin{eqnarray*}
\C &\rightarrow &  {\rm Tur} _\R (\Sigma,\P ^ 1)\\
z&\mapsto &{\mathcal E}_z\otimes \mathcal O(\frac 12  R)
\end{eqnarray*}
\end {corollary}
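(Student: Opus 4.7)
The plan is to combine the two containment statements of Theorem~\ref{first Prym} and Theorem~\ref{second Prym}, push them down through the sixfold $\tau$-quotient, and then invoke an existing linearity criterion for Lax-type eigenline flows. First I would exploit the $\tau$-invariance $\tau^*\hat{\mathcal E}_z \simeq \hat{\mathcal E}_z$ from Theorem~\ref{first Prym}, together with the fact that $\hat\Sigma$, $\hat C_1$, $\hat C_2$ and the ramification divisor $\hat R$ are all $\tau$-invariant, to descend $\hat{\mathcal E}_z$ to a well-defined holomorphic line bundle $\mathcal E_z$ on $\Sigma=\hat\Sigma/\tau$, with $\hat R$ the pullback of $R$ under $\hat\Sigma\to\Sigma$. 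Pushing the two containments $\hat{\mathcal E}_z \otimes \mathcal O(\frac12\hat R) \in P(\hat\Sigma, \hat C_j)$ for $j=1,2$ down through the sixfold quotient then places $\mathcal E_z \otimes \mathcal O(\frac12 R) \in P(\Sigma, C_1) \cap P(\Sigma, C_2)$.

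By the identification ${\rm Tur}(\Sigma,\P^1) = P(\Sigma, C_2) \cap P(\Sigma, C_1)$ established immediately before the corollary, this already puts the translated eigenline flow inside ${\rm Tur}(\Sigma,\P^1)$. The reality condition $\overline{\rho^*\hat{\mathcal E}_z} \simeq \hat{\mathcal E}_z$ from Theorem~\ref{first Prym} descends in the same way to $\overline{\rho^*\mathcal E_z} \simeq \mathcal E_z$, which by definition places the flow in the real slice ${\rm Tur}_\R(\Sigma,\P^1)$.

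For the linearity itself I would invoke Griffiths's criterion from \cite{Griffiths:85}, which gives a cohomological sufficient condition for the eigenline flow associated to a Lax equation $dA = [A,\varphi_\zeta]$ to be linear in $\mbox{Jac}(\Sigma)$. Burstall verified this hypothesis in \cite{Burstall:92} for precisely the Lax systems that arise from harmonic maps into symmetric spaces, and hence in particular for the primitive lift of $f$ into the six-symmetric space $G_2/T^2$ constructed in Section~2. Linearity in the ambient Jacobian forces linearity in any closed real subtorus through which the flow factors, and therefore the flow $z\mapsto \mathcal E_z \otimes \mathcal O(\frac12 R)$ is linear in ${\rm Tur}_\R(\Sigma,\P^1)$ as claimed.

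The main obstacle is the descent step: one must verify $\tau$-equivariance of the norm maps $\mbox{Nm}_1$ and $\mbox{Nm}_2$ and of the translation by $\frac12\hat R$, so that vanishing of the norm for the bundle on $\hat\Sigma$ really implies vanishing of the corresponding norm for the descended bundle on $\Sigma$, and dually that the $\tau$-fixed part of $P(\hat\Sigma,\hat C_j)$ maps isomorphically (or at least with controlled kernel) onto $P(\Sigma,C_j)$. Once this bookkeeping is carried through, the remaining ingredients -- the two Prym theorems, the identification of ${\rm Tur}$ with the intersection of Pryms, and the Griffiths--Burstall linearity principle -- combine directly to give the corollary.
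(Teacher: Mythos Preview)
Your proposal is correct and follows essentially the same route as the paper. The paper itself does not give a standalone proof of the corollary: it simply observes that Theorems~\ref{first Prym} and~\ref{second Prym} ``clearly yield a characterisation of the symmetry of the eigenline bundles'' and then invokes Griffiths's criterion together with Burstall's verification for harmonic maps, exactly as you do. Your explicit treatment of the $\tau$-descent and the equivariance of the norm maps is more detailed than anything the paper writes down; the paper absorbs all of this into the word ``clearly'' in the paragraph preceding the corollary.
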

We now compute the dimension of this subvariety.
\begin {proposition} The intersection of the two Prym varieties Tur$(\Sigma,\P^1)$ in which the eigenline bundles flow has dimension $12k+3$.
\end {proposition}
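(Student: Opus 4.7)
The plan is to exploit the exact sequence
\[
0 \to \mathrm{Tur}(\Sigma,\P^1) \to P(\Sigma, C_1) \to P(C_2, \P^1) \to 0
\]
set up just before the statement.  Since $J(\P^1) = 0$ one has $\dim P(C_2, \P^1) = g_{C_2}$ and $\dim P(\Sigma, C_1) = g_\Sigma - g_{C_1}$, so the proposition reduces to computing the two genera $g_{C_1}$ and $g_{C_2}$, with $g_\Sigma = 5(6k+1)$ already in hand from Theorem~1.1.

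I would dispose of $C_2$ first since it is hyperelliptic.  Generically $b_2$ is a polynomial of degree $12k+2$ in $\lambda$, and because this degree is even the double cover $p_2 : C_2 \to \P^1$ given by $z^2 = b_2(\lambda)$ is branched exactly over the $12k+2$ simple affine zeros of $b_2$, with $\infty$ unramified; Riemann--Hurwitz thus yields $g_{C_2} = 6k$.

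The main work, and the only spot where I would expect a plausible misstep, is the computation of $g_{C_1}$ by applying Riemann--Hurwitz to the 2-to-1 cover $\pi_1 : \Sigma \to C_1$ induced by the involution $\sigma:(\eta,\lambda)\mapsto(-\eta,\lambda)$.  The obvious fixed points of $\sigma$ are those with $\eta=0$ on $\Sigma$, i.e.\ the $12k+2$ affine zeros of $b_2$.  What must not be overlooked is that $\Sigma \to \P^1$ is totally ramified over $\lambda = 0$ and $\lambda=\infty$ (established in the proof of Theorem~1.1), so each of these fibres consists of a single point of $\Sigma$; since $\sigma$ preserves fibres, it fixes both.  The branch divisor of $\pi_1$ therefore has degree $12k+4$, and Riemann--Hurwitz gives
\[
60k+8 \;=\; 2(2 g_{C_1} - 2) + (12k+4),
\]
whence $g_{C_1} = 12k+2$ and $\dim P(\Sigma, C_1) = (30k+5)-(12k+2) = 18k+3$.

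Substituting into the exact sequence yields $\dim \mathrm{Tur}(\Sigma,\P^1) = (18k+3) - 6k = 12k+3$, as required.  As a sanity check, computing $g_{C_1}$ directly via Riemann--Hurwitz on the 3-to-1 cover $p_1 : C_1 \to \P^1$---with affine branching controlled by the discriminant $b_2(b_1^3/2 - 27 b_2)$ and again total ramification over $\lambda = 0, \infty$---gives the same value $12k+2$, confirming that omitting the $\lambda=0,\infty$ contribution (which would produce a non-integral genus and the spurious answer $12k$) is indeed a miscount.
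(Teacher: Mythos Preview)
Your proof is correct and follows the same overall strategy as the paper: use the exact sequence defining $\mathrm{Tur}(\Sigma,\P^1)$ together with Riemann--Hurwitz computations of $g_{C_1}$ and $g_{C_2}$, then subtract.  The one genuine difference is in how $g_{C_1}$ is obtained: the paper applies Riemann--Hurwitz directly to the degree-three cover $p_1\colon C_1\to\P^1$ (the discriminant of the cubic in $y$ coincides with that of the sextic in $\eta$, each affine zero now contributing a single simple ramification point, and $0,\infty$ still totally ramified, giving $2-2g_{C_1}=6-2(12k+2)-4$), whereas you instead apply it to the double cover $\pi_1\colon\Sigma\to C_1$ by counting fixed points of $\sigma$ and invoking the total ramification at $0,\infty$ established in Theorem~1.1.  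Both routes give $g_{C_1}=12k+2$, and indeed you include the paper's computation as your sanity check; your approach has the minor advantage of making transparent why the two extra points at $0,\infty$ cannot be omitted (a non-integral genus would result for $\pi_1$), while the paper's approach is more self-contained in that it does not need $g_\Sigma$ as input.
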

\begin {proof} The discriminant of the polynomial defining $C _ 1\rightarrow\P^1$ is the same as that for the cover $\Sigma\rightarrow\P^1$ but in this case each zero of the discriminant yields only one ramification point and so by the Riemann-Hurwitz formula,
\[
2-2g_{C_1}=3\cdot 2-2(12k+2)-4
\]
and
\[
g _{C _ 1} = 12 k +2.
\]
Clearly,
\[
g_{C_2}=6 k.
\]
Thus from
\[
H ^ 1 (\Sigma,\mathcal O) ^ - =\pi _ {C _2} ^*H ^ 1 (C _ 2,\mathcal
O) ^ -\oplus T P (\Sigma, C _ 2) ^ -
\]
we calculate that
\begin{eqnarray*}
\mbox {dim (Tur}\Sigma,\P^1) =P (\Sigma, C _ 2) ^ -& = & (g _\Sigma - g _ {C_1}) - (g _ {C_2} -g _ {\P^1})\\
& = &12 k +3.
\end{eqnarray*}
\end {proof}

\bibliography {harmonic}

\begin{thebibliography}{BVW94}

\bibitem[AvM80]{AM:80:1}
M.~Adler and P.~van Moerbeke.
\newblock Completely integrable systems, euclidean {L}ie algebras, and curves.
\newblock {\em Adv. in Math.}, 38(3):267--317, 1980.

\bibitem[Ber07]{Bertini:1907}
E.~Bertini.
\newblock {\em {Introduzione alla geometria projettiva degli iperspazi con
  appendice sulle curve algebriche e loro singolarit\`a.}}
\newblock {Pisa: E. Spoerri. VI + 426 S }, 1907.

\bibitem[BNR89]{BNR:89}
A.~Beauville, M.S. Narasimhan, and S.~Ramanan.
\newblock Spectral curves and the generalised theta divisor.
\newblock {\em J. Reine. Angew. Math.}, 398:169--179, 1989.

\bibitem[BPW95]{BPW:95}
J.~Bolton, F.~Pedit, and L.M. Woodward.
\newblock Minimal surfaces and the affine {T}oda field model.
\newblock {\em J. Reine. Angew. Math.}, 459:119--150, 1995.

\bibitem[Bry82]{Bryant:82:O}
R.L. Bryant.
\newblock Submanifolds and special structures on the {O}ctonions.
\newblock {\em J. Differential Geom.}, 17(2):185--232., 1982.

\bibitem[Bry87]{Bryant:87}
Robert~L. Bryant.
\newblock Metrics with exceptional holonomy.
\newblock {\em The Annals of Mathematics}, 126(3):525--576, 1987.

\bibitem[Bur92]{Burstall:92}
F.~Burstall.
\newblock Harmonic maps and soliton theory.
\newblock {\em Mathematica Contemporanea}, 2:1--18, 1992.

\bibitem[Bur95]{Burstall:95}
F.E. Burstall.
\newblock {Harmonic tori in spheres and complex projective spaces.}
\newblock {\em J. Reine Angew. Math.}, 469:149--177, 1995.

\bibitem[BVW94]{BVW:94}
J.~Bolton, L.~Vrancken, and L.M. Woodward.
\newblock On almost complex curves in the nearly {K}\"{a}hler 6-sphere.
\newblock 45(2):407--427, 1994.

\bibitem[BW92]{BW:92}
J.~Bolton and L.M. Woodward.
\newblock Congruence theorems for harmonic maps from a {R}iemann surface into
  $\mathbb{CP}^n$ and ${S}^n$.
\newblock 45(2):363--376, 1992.

\bibitem[CM04]{CM:03}
E.~Carberry and I.~McIntosh.
\newblock Special {L}agrangian ${T}^2$-cones in $\mathbb{C}^3$ exist for all
  spectral genera.
\newblock {\em JLMS}, 2(69):531--544, 2004.

\bibitem[Gri85]{Griffiths:85}
P.A. Griffiths.
\newblock Linearizing flows and a cohomological interpretation of {L}ax
  equations.
\newblock {\em Amer. J. Math.}, 107(6):1445--1484, 1985.

\bibitem[Gun67]{Gunning:67}
R.C. Gunning.
\newblock {\em Lectures on Vector Bundles over Riemann Surfaces}.
\newblock Princeton University Press, 1967.

\bibitem[Hit87]{Hitchin:87:Higgs}
Nigel Hitchin.
\newblock {Stable bundles and integrable systems.}
\newblock {\em Duke Math. J.}, 54:91--114, 1987.

\bibitem[Hit90]{Hitchin:90}
N.~Hitchin.
\newblock Harmonic maps from a 2-torus to the 3-sphere.
\newblock {\em J. Differential Geom.}, 31:627--710, 1990.

\bibitem[Hit00]{Hitchin:00}
Nigel Hitchin.
\newblock {The geometry of three-forms in six dimensions.}
\newblock {\em J. Differ. Geom.}, 55(3):547--576, 2000.

\bibitem[Hit07]{Hitchin:06}
Nigel Hitchin.
\newblock {Langlands duality and $G_{2}$ spectral curves.}
\newblock {\em Q. J. Math.}, 58(3):319--344, 2007.

\bibitem[HSW99]{Hitchin:99}
N.~J. Hitchin, G.~B. Segal, and R.~S. Ward.
\newblock {\em {Integrable systems. Twistors, loop groups, and Riemann
  surfaces. Based on lectures given at the instructional conference on
  integrable systems, Oxford, UK, September 1997.}}
\newblock {Oxford Graduate Texts in Mathematics 4. Oxford: Clarendon Press. ix,
  136~p.}, 1999.

\bibitem[Kan89]{Kanev:89}
V.~Kanev.
\newblock Spectral curves, simple {L}ie algebras and {P}rym-{T}jurin varieties.
\newblock {\em Proc. Symp. Pure Math.}, 49(1):627--644, 1989.

\bibitem[KP94]{KP:94}
L.~Katzarkov and T.~Pantev.
\newblock {Stable $G\sb 2$ bundles and algebraically completely integrable
  systems.}
\newblock {\em Compos. Math.}, 92(1):43--60, 1994.

\bibitem[McI95]{McIntosh:95}
I.~McIntosh.
\newblock A construction of all non-isotropic harmonic tori in complex
  projective space.
\newblock {\em Internat. J. Math.}, 6(6):831--879, 1995.

\bibitem[McI02]{McIntosh:02}
I.~McIntosh.
\newblock Special {L}agrangian cones in $\mathbb{C}^3$ and primitive harmonic
  maps.
\newblock {\em math.DG/0201157}, 2002.

\bibitem[MM01]{MaMa:01}
H.~Ma and Y.~Ma.
\newblock Totally real minimal tori in $\mathbb{CP}^2$.
\newblock {\em math.DG/0106141}, 2001.

\bibitem[Mum]{Mumford:74}
D.~Mumford.
\newblock Prym varieties {I}.

\bibitem[Sha91]{Sharipov:91}
R.~Sharipov.
\newblock Minimal tori in the five dimensional sphere in $\mathbb{C}^3$.
\newblock {\em Theoret. and Math. Phys.}, 87:363--369, 1991.

\end{thebibliography}
\bibliographystyle{alpha}
\end {document}